\providecommand{\U}[1]{\protect\rule{.1in}{.1in}}
\newtheorem{theorem}{Theorem}
\newtheorem{corollary}[theorem]{Corollary}
\newtheorem{lemma}[theorem]{Lemma}
\newtheorem{proposition}[theorem]{Proposition}
\newtheorem{remark}[theorem]{Remark}
\def\R{\mathbb{R}}
\def\N{\mathbb{N}}
\newcounter{mnotecount}[section]
\newcommand{\rmnote}[1]{}
\begin{document}

\title[Half-space theorems]{Half-space theorems for the Allen-Cahn equation and related problems}

\author[F. Hamel]{Fran{\c{c}}ois Hamel}
\address{\noindent F. Hamel: Aix Marseille Univ, CNRS, Centrale Marseille, I2M, Marseille, France}
\email{francois.hamel@univ-amu.fr}

\author[Y. Liu]{Yong Liu}
\address{\noindent Y. Liu: Department of Mathematics, University of Science and Technology of
China, Hefei, China}
\email{yliumath@ustc.edu.cn}

\author[P. Sicbaldi]{Pieralberto Sicbaldi}
\address{P. Sicbaldi: Universidad de Granada, Departamento de Geometr{\'{\i}}a y Topolog{\'{\i}}a, Facultad de Ciencias, Campus Fuentenueva, 18071 Granada, Spain \& Aix Marseille Univ, CNRS, Centrale Marseille, I2M, Marseille, France}
\email{pieralberto@ugr.es}

\author[K. Wang]{Kelei Wang}
\address{\noindent K. Wang: School of Mathematics and Statistics, Wuhan University, Wuhan,
Hubei, China}
\email{wangkelei@whu.edu.cn}
\author[J. Wei]{Juncheng Wei}
\address{\noindent J. Wei: Department of Mathematics, University of British Columbia,
Vancouver, BC V6T 1Z2, Canada }
\email{jcwei@math.ubc.ca}
\maketitle

\begin{abstract}
In this paper we obtain rigidity results for a non-constant entire solution $u$ of the Allen-Cahn equation in $\mathbb{R}^n$, whose level set $\{u=0\}$ is contained
in a half-space. If $n\leq 3$ we prove that the solution must be one-dimensional. In dimension $n\geq 4$, we prove that either the solution is one-dimensional or stays below a one-dimensional solution and converges to it after suitable translations. Some generalizations to one phase free boun\-dary problems are also obtained.
\end{abstract}
\medskip 
\small{AMS 2010 Classification: primary 35B08, secondary 35B06, 35B51, 35J15.}


\section{Introduction and main results}\label{sec1}

We are interested in rigidity results for classical entire solutions of the
Allen-Cahn equation
\begin{equation}
-\Delta u=u-u^{3},\ \text{ }x=(x_{1},\cdots,x_{n})  \in\mathbb{R}^{n}.
\label{1}%
\end{equation}
In the simplest case $n=1,$ equation~(\ref{1}) reduces to an ODE and has a
heteroclinic solution
$$H(x)  =\tanh\left(  \frac{x}{\sqrt{2}}\right).$$
Phase plane analysis tells us that up to a translation, $H$ is
the unique monotone increasing solution in $\mathbb{R}.$ The one-dimensional solution $H$ actually plays an important role in the
theory of Allen-Cahn equation in general dimensions. Indeed, De Giorgi
\cite{DeG} conjectured that for $n\leq8,$ if a bounded solution $u$ to~\eqref{1}
is strictly monotone in one direction, say~$x_n$, then it must be
one-dimensional, which then means that $u$ is identically equal to~$H(x\cdot e+a)$ for some unit vector $e$, with $e_n>0$, and some real number $a$. This conjecture has been proved to be true for $n=2$
(Berestycki, Caffarelli and Nirenberg~\cite{BCN}, Ghoussoub and Gui~\cite{G}), $n=3$ (Ambrosio and Cabr\'e~\cite{C}), and for $4\leq
n\leq8$ (Savin~\cite{S}) under the additional limiting condition
\[
\lim_{x_{n}\rightarrow\pm\infty}u(x',x_{n})  =\pm1
\]
pointwise in $x'=(x_1,\cdots,x_{n-1})\in\R^{n-1}$. This condition implies that the level sets $\{x\in\R^n:u(x)=\mu\}$, for every $\mu\in(-1,1)$, of the function $u$ are entire graphs with respect to the first $n-1$ variables. On the other hand, for $n\geq9,$ Del Pino, Kowalczyk and the fifth author
\cite{M2} constructed monotone solutions which are not one-dimensional, showing that the condition $n\leq 8$ in the De Giorgi conjecture cannot be relaxed.

The De Giorgi conjecture can be regarded as a rigidity result for the
Allen-Cahn equation. The second rigidity result we would like to mention here
is about the classification of the solutions which are global minimizers of
the associated energy functional. Savin~\cite{S} proved that global minimizers are
one-dimensional up to dimensions $n\leq7$. The second, fourth and fifth authors constructed in
\cite{LWW} counterexamples in dimension $8$, i.e. global minimizers which are not
one-dimensional. For related rigidity results for the solutions to the Allen-Cahn equation,
we refer to~\cite{AC, C3, Farina2, Farina} and the references therein.

\medskip

In this paper, we are interested in rigidity results for the Allen-Cahn equation when the zero level set
$$\Gamma_0:=\{u=0\}=\big\{x\in\R^n:u(x)=0\big\}$$
of the solution (always understood in the classical sense) is contained in a half-space, say $\{x_n>0\}:=\{x\in\R^n:x_n>0\}$ up to translation and rotation. However, we point out that we make no assumption on the monotonicity of $u$ in a direction nor on its stability or minimizing properties.

Our first result is the following half-space rigidity result:

\begin{theorem}
\label{main}{\rm{(Weak half-space theorem)}} Let $n\leq3$ and $u$ be a non-constant
solution of~\eqref{1}. Suppose that the zero level set $\left\{
u=0\right\}  $ is contained in $\left\{  x_{n}%
>0\right\}  $. Then $u$ is one-dimensional. More precisely, there exists $a\in\R$ such that either $u(x)=H(-x_n+a)$ for all $x\in\R^n$, or $u(x)=H(x_n+a)$ for all $x\in\R^n$.
\end{theorem}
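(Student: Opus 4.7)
Since $\{u=0\}\subset\{x_n>0\}$ and $u$ is continuous, $u$ keeps a constant sign on $\{x_n\leq 0\}$; replacing $u$ by $-u$ if needed (which also solves~\eqref{1}), I may assume $u>0$ there. A standard maximum-principle argument at a sequence approaching $\sup|u|$, together with $u-u^{3}<0$ for $|u|>1$, gives $|u|\leq 1$ on $\R^n$, so all translations of $u$ are pre-compact in $C^{2}_{\mathrm{loc}}$ by elliptic regularity. My plan is to establish strict monotonicity of $u$ in the $x_n$-direction and then invoke the De~Giorgi conjecture in dimensions $n\leq 3$ (Berestycki--Caffarelli--Nirenberg~\cite{BCN} and Ghoussoub--Gui~\cite{G} for $n=2$, Ambrosio--Cabr\'e~\cite{C} for $n=3$): this yields that $u$ depends only on $x_n$, and the uniqueness up to translation of the monotone heteroclinic then identifies the profile with $H(-x_n+a)$ (the companion case $u(x)=H(x_n+a)$ corresponding to the initial sign change $u\leftrightarrow-u$).

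To obtain monotonicity I would run a sliding argument against the one-parameter family of one-dimensional barriers $\{H(-x_n+s)\}_{s\in\R}$. For $s$ large positive, $H(-\cdot_n+s)\approx 1\geq u$, so the set $S:=\{s\in\R:H(-x_n+s)\geq u\text{ on }\R^n\}$ is a non-empty right half-line. That $S$ is bounded below uses the sign condition: downward translates $u^k(x):=u(x',x_n-k)$, $k\to+\infty$, converge along a subsequence to a non-negative bounded entire solution of~\eqref{1}, which by the strong maximum principle and a Liouville-type classification must equal~$1$; hence $u$ cannot be dominated by arbitrarily low barriers. Therefore $s_*:=\inf S$ is finite and the non-negative function $w:=H(-x_n+s_*)-u$ satisfies the linear equation
\[
-\Delta w\,=\,\bigl[1-H^{2}-Hu-u^{2}\bigr]\,w.
\]
If $w$ vanishes at a finite point, the strong maximum principle yields $w\equiv 0$ and we are done with $a=s_*$. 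Otherwise the infimum of $w$ is attained only along an escaping sequence $x^k\in\R^n$, and translating $u$ along $x^k$ and using $C^{2}_{\mathrm{loc}}$-compactness produces a limit that coincides with $H(-x_n+s_*)$ up to an $x'$-shift; passing to the limit for $x_n$-derivatives gives $\partial_{x_n}u\leq 0$ at the limiting contact points, and a bootstrap via the strong maximum principle applied to $\partial_{x_n}u$, which solves the linearized equation $-\Delta(\partial_{x_n}u)=(1-3u^{2})\,\partial_{x_n}u$, propagates $\partial_{x_n}u<0$ to all of $\R^n$.

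The hardest step is precisely this passage from \emph{contact at infinity} to strict monotonicity of $u$ itself: transferring the one-dimensional structure from a translation limit back to the original solution requires combining the compactness of translates with a careful application of the linearized strong maximum principle, and ruling out the degenerate situations in which the translation limit is a constant $\pm 1$ rather than a true heteroclinic. Once $\partial_{x_n}u<0$ on $\R^n$ is secured, the De~Giorgi conjecture in $n\leq 3$, which in those dimensions holds without any asymptotic condition at $x_n=\pm\infty$, forces $u$ to depend only on $x_n$; uniqueness up to translation of the monotone decreasing one-dimensional profile of~\eqref{1} then gives $u(x)=H(-x_n+a)$ for some $a\in\R$, with $a>0$ forced by the placement $\{u=0\}\subset\{x_n>0\}$.
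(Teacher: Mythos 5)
Your plan --- slide the one-dimensional profiles to get a contact (possibly at infinity), deduce $\partial_{x_n}u$ has a sign, then invoke De Giorgi for $n\le 3$ --- breaks at exactly the step you flag as hardest, and that step is not a technical refinement but the actual content of the theorem. The sliding part is essentially Theorem~\ref{main2} of the paper (note, though, that your orientation is off: with $u>0$ in $\{x_n\le 0\}$ nothing is known about $u$ as $x_n\to+\infty$, so the set $S=\{s: H(-x_n+s)\ge u \hbox{ on } \R^n\}$ has no reason to be non-empty; the comparison that does work in that normalization is from \emph{below}, $u\ge H(-x_n+a)$, i.e.\ one must slide the family whose limit $-1$ occurs on the side where $u$ is already known to tend to $-1$). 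Granting that, the sliding only yields the dichotomy: either $u$ equals the profile, or $u$ lies strictly on one side of it and touches it only along a sequence of horizontal translations going to infinity. From ``contact at infinity'' you cannot extract $\partial_{x_n}u\le 0$ at any finite point of $u$ itself: the translated limits are monotone, but no sign information on $\partial_{x_n}u$ passes back to the original solution, and the strong maximum principle applied to $\partial_{x_n}u$, which solves $-\Delta v=(1-3u^2)v$ with a zero-order coefficient of no fixed sign, requires a signed solution or an interior zero of a signed solution to begin with --- it cannot ``propagate a sign from infinity.'' So the monotonicity you need as input for the De Giorgi results in $n=2,3$ is never established; in fact monotonicity of such $u$ is only known a posteriori, once one-dimensionality is proved.

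The paper, having reached precisely this dichotomy, abandons the maximum principle and uses a different mechanism to rule out (or rather exploit) the second alternative: the Pohozaev/balancing identity of~\cite{DKP} applied on semi-infinite strips (for $n=2$) or half-cylinders (for $n=3$), with boundary contributions controlled by the exponential gradient bound of Remark~\ref{remexp} and by the translated limits at the far ends. Combined with Modica's inequality $F(u)\ge |\nabla u|^2/2$, the identity forces $u_{x_1}\equiv 0$ (resp.\ $u_{x_1}^2+u_{x_2}^2\equiv 0$) on a fixed horizontal hyperplane, and then the equality case in Modica's estimate (Caffarelli--Garofalo--Segala~\cite{CGS}) gives one-dimensionality. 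For $n=3$ this further requires the decay estimate of Lemma~\ref{K} and the differential-inequality argument of Lemma~\ref{flimit}. None of this (nor any substitute for it) appears in your proposal, so as written the proof has a genuine gap at its central step.
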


Note that, in any dimension $n\ge 1$, if $u$ is a solution of~\eqref{1}, then $u$ is necessarily bounded and
$$-1\le u\le 1\ \hbox{ in }\R^n,$$
as follows from~\cite[Proposition~1.9]{Farina98}. Furthermore, if $\left\{  u=0\right\}  $ is empty, then by applying~\cite[Theorem~1.1]{Farina03} or by constructing suitable
comparison functions as in the first part of the proof of Theorem~\ref{main2} (see Section~\ref{sec2} below), one knows that $u\equiv\pm1$ in $\R^n$. Therefore, if $u$ is non-constant, then $\left\{  u=0\right\}\neq\emptyset$ and $-1<u<1$ in $\R^n$ from the strong maximum principle. Moreover, if $u\ge0$ (resp. $u\le0$) in $\R^n$ and $\{u=0\}\neq\emptyset$, then $u\equiv 0$ in $\R^n$ from the strong maximum principle. Hence we can assume
without loss of generality that the three sets $\left\{  u=0\right\}$, $\{u>0\}$ and $\{u<0\}$ are not empty.

We point out that here it is not assumed that the nodal set $\{ u=0\}$ is a graph, that is, the sets $\{u>0\}$ or~$\{u<0\}$ are not assumed to be epigraphs. For rigidity results in the epigraph case we refer to~\cite{Farina0, Farina3} and the references therein.

As an application of Theorem~\ref{main}, using the classification result of
stable solutions of the Allen-Cahn equation in the plane, we get the following
\emph{strong half-space theorem}:

\begin{corollary}{\rm{(Strong half-space theorem)}}\label{strong}
Suppose $n=2$. Let $u_{1}<u_{2}$ be two non-constant solutions of~\eqref{1} in
$\mathbb{R}^{2}$. Then $u_{1}$ and $u_{2}$ are one-dimensional, namely there exist a unit vector $e$ and some real numbers $a<b$ such that $u_1(x)=H(x\cdot e+a)$ and $u_2(x)=H(x\cdot e+b)$ for all $x\in\R^2$.
\end{corollary}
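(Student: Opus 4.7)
The strategy is to reduce the corollary to the weak half-space theorem (Theorem~\ref{main} with $n=2$) by inserting between $u_1$ and $u_2$ a stable solution of~\eqref{1}, which the two-dimensional stable classification will force to be one-dimensional. This one-dimensional intermediate solution then supplies the half-plane containing $\{u_1=0\}$ and $\{u_2=0\}$ needed in order to apply Theorem~\ref{main}. The main work lies in the first step, namely the existence of a non-constant stable solution sandwiched between $u_1$ and $u_2$; once this is available, the remainder is essentially a direct application of Theorem~\ref{main} together with some bookkeeping.

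The first step is to construct a stable solution $u^*$ of~\eqref{1} with $u_1\le u^*\le u_2$ in $\R^2$. On each ball $B_R$ I minimize the Allen--Cahn energy $\int_{B_R}\bigl(\tfrac12|\nabla u|^2+\tfrac14(1-u^2)^2\bigr)$ in the class $\{u\in H^1(B_R):u=u_1\text{ on }\partial B_R,\ u_1\le u\le u_2\text{ in }B_R\}$. Because $u_1$ and $u_2$ are respectively a sub- and a super-solution of~\eqref{1}, a standard truncation argument shows that any minimizer $u_R$ is a classical interior solution of~\eqref{1} with $u_1\le u_R\le u_2$ in $B_R$, and it is stable since it minimizes the energy. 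Elliptic regularity and a diagonal extraction as $R\to\infty$ yield a stable solution $u^*$ in $\R^2$ with $u_1\le u^*\le u_2$. This $u^*$ is non-constant: it cannot equal $\pm 1$ (otherwise $u_1$ or $u_2$ would be constant) nor the unstable constant $0$. The classification of bounded stable solutions of~\eqref{1} in the plane then gives $u^*(x)=H(x\cdot e+c)$ for some unit vector $e$ and some $c\in\R$, so $\{u^*>0\}$ and $\{u^*<0\}$ are the two open half-planes bounded by the line $\{x\cdot e+c=0\}$.

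Finally, I compare $u^*$ with $u_1$ and $u_2$. The nonnegative function $w:=u^*-u_1$ satisfies a linear elliptic equation $-\Delta w+Vw=0$ with bounded coefficient $V$, so by the strong maximum principle either $w\equiv 0$, in which case $u_1\equiv u^*$ is already one-dimensional, or $w>0$ everywhere. In the latter case $\{u_1=0\}\subset\{u^*>0\}=\{x\cdot e+c>0\}$, and after a rigid motion Theorem~\ref{main} forces $u_1$ to be one-dimensional; between the two alternatives $H(x\cdot e+a)$ and $H(-x\cdot e+a')$ provided by the theorem, only the first is compatible with the sandwich $u_1<u^*=H(x\cdot e+c)$, since the second profile crosses $u^*$. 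Thus $u_1(x)=H(x\cdot e+a)$ for some $a\in\R$, and the same argument applied to $u_2-u^*\ge 0$ yields $u_2(x)=H(x\cdot e+b)$ with the same direction $e$. The strict ordering $u_1<u_2$ together with the strict monotonicity of $H$ then gives $a<b$.
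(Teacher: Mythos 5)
Your proposal is correct and follows essentially the same route as the paper: you construct a stable solution sandwiched between $u_1$ and $u_2$ by constrained minimization on balls, identify it with $H(x\cdot e+c)$ via the two-dimensional classification of stable solutions, and then apply the weak half-space Theorem~\ref{main} to $u_1$ and $u_2$, whose zero sets lie in the corresponding half-planes. The additional details you spell out (excluding the constants $0,\pm1$, fixing the orientation of the one-dimensional profile, and deducing $a<b$) are exactly the bookkeeping the paper leaves implicit.
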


We will also generalize Theorem~\ref{main} to a free boundary problem. We refer
to Section~\ref{free} for the precise statement and its proof.

\medskip

Our results are inspired by analogous results in the minimal surface theory.
A half-space theorem for minimal surfaces in $\mathbb{R}^{3}$ was proved by
Hoffman and Meeks~\cite{HM}. It states that connected, proper, minimal surfaces
in $\mathbb{R}^{3}$ are necessarily planes. A version of a half-space theorem for
minimal surfaces with bounded Gaussian curvature is proved in~\cite{X}. The
half-space theorem plays an important role in the understanding of the
structure of minimal spaces, and there is a vast literature on this subject. It is
used in the proof of the local removal singularity theorem~\cite{MPR}, it is
also used to study the properness of minimal surfaces (see, for instance,
\cite{MR}). {In~\cite{CM}, Colding and Minicozzi proved that the plane is the only
complete embedded minimal disk in $\mathbb{R}^{3}$}, by establishing a
chord-arc bound and applying Hoffman-Meeks half-space theorem.

We remark that the half-space theorem is not true for minimal hypersurfaces in
$\mathbb{R}^{n}$ with $n\geq4$. For example the higher dimensional catenoid
provides a counterexample. However, for the Allen-Cahn equation, this question is still
open in higher dimensions. In view of the construction of solutions
concentrated on higher dimensional catenoid~\cite{ADW}, we turn to believe
that the half-space theorem of the Allen-Cahn equation should be true also for all $n\geq
4$. Intuitively, for solutions of the Allen-Cahn equation there are strong
interations between different ends, while this is not the case for minimal surfaces.

The proof of the half-space theorem for minimal surfaces uses sweeping principle.
It appears that this idea does not work for the Allen-Cahn case, although we can
prove partial results along this direction. Our main result for solutions of the Allen-Cahn equation in arbitrary dimension with zero level set contained in a half-space is the following:

\begin{theorem}\label{main2}
Let $n\ge 1$ and $u$ be a non-constant solution of the Allen-Cahn
equation~\eqref{1} in~$\mathbb{R}^n$. If $u<0$ in the
half-space $\{x_n<0\}$, then there exists $a\in
\mathbb{R}$ such that
\[
u(x) \leq H(x_n+a)
\]
for all $x \in\mathbb{R}^{n}$, and either
\begin{enumerate}
\item $u(x) = H(x_n+a)$ for all
$x\in\mathbb{R}^n$, or
\item $u(x) < H(x_n+a)$ for all
$x\in\mathbb{R}^n$ and there exists a sequence $(y_{k})_{k\in\mathbb{N}}$ in
$\mathbb{R}^{n-1}\times\{0\}$ such that $|y_k|\to+\infty$ as $k\to+\infty$, and the functions $u(\cdot+y_k)$
converge in $C^2_{loc}(\mathbb{R}^{n})$ to the function $x\mapsto H(x_n+a)$ as $k\to+\infty$.
\end{enumerate}
\end{theorem}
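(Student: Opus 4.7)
The plan is a sliding argument with the one-parameter family of one-dimensional heteroclinics $H_a(x):=H(x_n+a)$, $a\in\mathbb{R}$: slide $a$ down to the critical value $a^*:=\inf\{a\in\mathbb{R}:u\le H_a\text{ on }\mathbb{R}^n\}$ and then apply the strong maximum principle to produce the dichotomy. By the reductions in the introduction, $-1<u<1$ and the three sets $\{u=0\}$, $\{u>0\}$, $\{u<0\}$ are nonempty; by continuity the hypothesis $u<0$ on $\{x_n<0\}$ gives $u\le 0$ on $\{x_n\le 0\}$.

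The first technical task is the sharp exponential decay
\[
0 < u(x)+1 \le C\, e^{\sqrt{2}\, x_n} \qquad \text{for all } x\in\{x_n\le 0\},
\]
for some $C>0$, together with the uniform convergence $u\to -1$ as $x_n\to-\infty$. Writing $v:=u+1\in[0,1]$, the equation rearranges as $(-\Delta+2)v=v^2(3-v)\ge 0$ on $\mathbb{R}^n$, so $v$ is a bounded supersolution of the coercive operator $-\Delta+2$; comparing with the explicit solutions $\bar v_C(x):=C\, e^{\sqrt{2}\, x_n}$ via a Phragmén--Lindelöf argument on the half-space $\{x_n<0\}$ (possible thanks to the positive zero-order coefficient and the boundary bound $v\le 1$ on $\{x_n=0\}$) yields the decay. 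Uniform convergence comes from a blow-up argument: any $C^2_{loc}$-limit $u_\infty$ of translates $u(\cdot+x_k)$ with $x_{k,n}\to -\infty$ is a solution of Allen--Cahn with $u_\infty\le 0$, and the strong maximum principle recalled in the introduction rules out the case $u_\infty\equiv 0$. A parallel slab estimate shows $\sup u<1$ on any $\{|x_n|\le M\}$: otherwise a sequence $u(x_k)\to 1$ with $x_{k,n}$ bounded would produce, via the SMP applied to $1-u_\infty$, a blow-up limit $u_\infty\equiv 1$ incompatible with the half-space hypothesis.

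With these estimates in hand, the set $\mathcal A:=\{a\in\mathbb{R}:u\le H_a\text{ on }\mathbb{R}^n\}$ is bounded below ($u>-1$ somewhere and $H_a\to -1$ as $a\to-\infty$) and nonempty: for $a$ large, one shows $u\le H_a$ on $\{x_n\le 0\}$ by combining the exponential decay with the asymptotics $H_a+1\sim 2e^{\sqrt{2}\, a}e^{\sqrt{2}\, x_n}$ as $x_n\to-\infty$, and on $\{x_n\ge 0\}$ by using the slab bound $\sup u<1$ together with $H_a\to 1$; an interior minimum of $\phi_a:=H_a-u$ with $\phi_a<0$ is ruled out via the sign analysis of the linear equation $-\Delta\phi_a=c(x)\phi_a$, where $c(x):=\int_0^1 f'(tH_a+(1-t)u)\,dt$ and $f(u)=u-u^3$, since at such a minimum one would need $|\xi|\le 1/\sqrt{3}$ for some $\xi\in[u,H_a]$, incompatible with the asymptotics for $a$ large. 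By continuity and monotonicity of $a\mapsto H_a$, $a^*\in\mathcal A$, so $u\le H(x_n+a^*)$ on $\mathbb{R}^n$. If $u(x_0)=H(x_{0,n}+a^*)$ at some $x_0$, the difference $\phi:=H(\cdot+a^*)-u\ge 0$ satisfies $-\Delta\phi-c(x)\phi=0$ with $c\in L^\infty$, and rewriting as $(-\Delta+c^-)\phi\ge 0$ the strong maximum principle forces $\phi\equiv 0$, giving conclusion (1) with $a=a^*$.

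Otherwise $u<H(x_n+a^*)$ strictly everywhere, and we construct the sequence $y_k$ of conclusion (2). By minimality of $a^*$, for each $k$ there is $z_k=(z_k',z_{k,n})\in\mathbb{R}^n$ with $u(z_k)>H(z_{k,n}+a^*-1/k)$, so $\phi(z_k)\to 0$. The exponential decay forces $z_{k,n}$ to stay bounded from below (the rates of $u+1$ and $H(\cdot+a^*-1/k)+1$ would otherwise become incompatible), and a parallel control near $+1$ for the opposite end keeps $z_{k,n}$ bounded from above, so $z_{k,n}\to z^*\in\mathbb{R}$ up to subsequence. Strict inequality precludes $|z_k|$ being bounded (which by $C^2_{loc}$-compactness would yield interior equality and, by the SMP of the previous paragraph, contradict the strict inequality), hence $|z_k'|\to\infty$. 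Setting $y_k:=(z_k',0)\in\mathbb{R}^{n-1}\times\{0\}$, standard $C^{2,\alpha}$-compactness gives, up to subsequence, $u(\cdot+y_k)\to\widetilde u_\infty$ in $C^2_{loc}(\mathbb{R}^n)$ with $\widetilde u_\infty$ a solution of Allen--Cahn, $\widetilde u_\infty\le H(x_n+a^*)$, and equality at the point $(0,z^*)$. The SMP applied to $H(x_n+a^*)-\widetilde u_\infty$ then yields $\widetilde u_\infty\equiv H(x_n+a^*)$, giving conclusion (2). The main technical obstacle is the sharp exponential decay estimate and its counterpart near $+1$, which must both be derived from the one-sided sign information alone; this is what distinguishes the Allen--Cahn setting from its minimal-surface analogue, where the sweeping argument is more direct.
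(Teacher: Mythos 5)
Your proposal follows essentially the same sliding strategy as the paper: push $H(x_n+\omega)$ down to the critical value $a^*$, then apply the strong maximum principle; your sign analysis of $c(x)$ is the same key point as the paper's use of the monotonicity of $s\mapsto s-s^3$ on $[-1,-1/\sqrt{3}]$ and $[1/\sqrt{3},1]$. However, the step establishing that the near-touching coordinates $z_{k,n}$ stay bounded has a genuine gap. You assert that the decay rates of $u+1$ and $H(\cdot+a^*-1/k)+1$ become incompatible as $z_{k,n}\to-\infty$, but both decay at exactly the same exponential rate $e^{\sqrt{2}\,x_n}$. The two-sided bound $H(z_{k,n}+a^*-1/k) < u(z_k) \le H(z_{k,n}+a^*)$ reads asymptotically $2e^{\sqrt{2}(z_{k,n}+a^*-1/k)}(1+o(1)) < u(z_k)+1 \le 2e^{\sqrt{2}(z_{k,n}+a^*)}$, which is perfectly compatible with $z_{k,n}\to-\infty$; nothing stops the near-touching from escaping to $x_n=-\infty$ (and a symmetric difficulty arises for boundedness from above).

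The paper sidesteps this by proving directly that $\sup_{\mathbb{R}^{n-1}\times[-A,B]}(u-U^a)=0$ on a suitable bounded slab: if this supremum were strictly negative, then by uniform continuity of $H$ there would be an $\omega<a$ with $u\le U^\omega$ on the slab and $U^\omega\ge 1/\sqrt{3}$ on $\{x_n\ge B\}$, and the same decreasing-nonlinearity comparison argument would propagate $u\le U^\omega$ to the two complementary half-spaces $\{x_n\le -A\}$ and $\{x_n\ge B\}$, contradicting the minimality of $a$. This produces a near-touching sequence $\xi_k$ with $\xi_{k,n}\in[-A,B]$ for free, without any quantitative decay. Separately, your Phragm\'en--Lindel\"of step needs re-examination: $(-\Delta+2)v=v^2(3-v)\ge 0$ makes $v=u+1$ a \emph{supersolution} of the coercive operator, which by the minimum principle yields a lower bound rather than the desired upper bound $v\le C e^{\sqrt{2}\,x_n}$; to obtain the sharp decay one needs a further bootstrap absorbing the quadratic nonlinearity. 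The paper does not need the sharp decay at this stage at all---only the uniform convergence $u\to -1$ as $x_n\to-\infty$ from Proposition~\ref{lemdist}---and derives the exponential gradient bound a posteriori in Remark~\ref{remexp} from the already established $u\le H(x_n+a)$ together with Modica's inequality.
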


In Theorem~\ref{main2} and throughout the paper, $|\ |$ denotes the Euclidean
norm in~$\mathbb{R}^{n}$. We also use the following notations:
$B_{R}(x)=\big\{y\in\mathbb{R}^{n}: |y-x|<R\big\}$ and
$B_{R}=B_{R}(0)$ for $R>0$ and $x\in\mathbb{R}^{n}$, and we denote
$${\rm{dist}}(x,E)=\inf_{y\in E}|x-y|$$
for $x\in\R^n$ and $E\subset\R^n$.

We complete the introduction by listing some corollaries following from Theorem~\ref{main2}.

\begin{corollary}\label{cor2}
Let $n\ge 1$ and $u$ be a non-constant solution of the Allen-Cahn
equation~\eqref{1} in~$\mathbb{R}^{n}$. Then there does not exist a non-degenerate
cone containing $\{u=0\}$.
\end{corollary}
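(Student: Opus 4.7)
The strategy is to argue by contradiction and exploit the dichotomy of Theorem~\ref{main2}. Suppose there exists a non-degenerate cone $C\subset\R^n$ containing $\{u=0\}$; by ``non-degenerate'' we mean a convex cone of aperture strictly less than~$\pi$. In dimension $n=1$ no such cone exists, so the statement is vacuous, and we henceforth take $n\ge 2$. After translating the apex of $C$ to the origin and applying a rotation, we may assume $C\subset\{x\in\R^n:x_n\ge\alpha\,|x|\}$ for some $\alpha\in(0,1]$, so that $\{u=0\}\subset\{x_n\ge 0\}$. Since the half-space $\{x_n<0\}$ is connected and contains no zero of $u$, the solution $u$ has constant sign there, and after replacing $u$ by $-u$ if needed we may assume $u<0$ on $\{x_n<0\}$. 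Theorem~\ref{main2} then supplies $a\in\R$ with $u(x)\le H(x_n+a)$ on $\R^n$, and either (i) equality holds identically, or (ii) the inequality is strict everywhere and there is a sequence $y_k\in\R^{n-1}\times\{0\}$ with $|y_k|\to+\infty$ along which $u(\cdot+y_k)$ converges to $x\mapsto H(x_n+a)$ in $C^2_{loc}(\R^n)$.

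Alternative~(i) is ruled out directly: the nodal set of $H(x_n+a)$ is the hyperplane $\{x_n=-a\}$, which cannot lie inside a cone of aperture $<\pi$, since the required bound $-a\ge\alpha\sqrt{|x'|^2+a^2}$ fails as soon as $|x'|$ is large. For alternative~(ii), the facts $H(0)=0$ and $H'(0)=1/\sqrt{2}>0$, combined with the $C^1_{loc}$ convergence, produce---via the implicit function theorem, or simply by continuity and strict monotonicity in $x_n$---a point $\xi_k\in\R^n$ close to $-a\,e_n$ with $u(y_k+\xi_k)=0$ for every sufficiently large~$k$. The zeros $z_k:=y_k+\xi_k$ must satisfy $z_k\in\{u=0\}\subset C\subset\{x_n\ge\alpha|x|\}$, yet $z_k\cdot e_n\to -a$ stays bounded while $|z_k|\ge|y_k|-|\xi_k|\to+\infty$, forcing $(z_k\cdot e_n)/|z_k|\to 0<\alpha$ and yielding the desired contradiction.

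The only technical step worth naming is the extraction of the zeros $z_k$ in alternative~(ii), which rests on the non-degeneracy of $H$ at its zero together with the $C^2_{loc}$ convergence; the remainder of the argument simply reads the dichotomy of Theorem~\ref{main2} against the geometric constraint imposed by a non-degenerate cone.
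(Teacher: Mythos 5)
Your proposal is correct and follows essentially the same route as the paper: normalize so that the cone forces $u<0$ in $\{x_n<0\}$, invoke Theorem~\ref{main2}, and contradict each branch of the dichotomy with the cone geometry (a hyperplane cannot lie in a non-degenerate cone, and in the translated limit the constraint fails at infinity). The only cosmetic difference is that in alternative (ii) you extract actual zeros $z_k=y_k+\xi_k$ escaping the cone, whereas the paper evaluates the sign of $u$ at points $(y'_k,x_n)$ lying below the cone; both are immediate consequences of the same containment and the $C^2_{loc}$ convergence.
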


\begin{corollary}\label{cor4}
Let $n\ge 1$ and $u$ be a non-constant solution of the Allen-Cahn
equation~\eqref{1} in $\mathbb{R}^{n}$. If there exists a closed half-space
$E$ such that $\{u=0\}\subset E$ and $\{u=0\}\cap\partial E
\neq\emptyset$, then there is $a\in\mathbb{R}$ such that $u(x) =H(x\cdot e+a)$
for all $x\in\mathbb{R}^{n}$, where $e$ is a unit vector orthogonal to $\partial E$.
\end{corollary}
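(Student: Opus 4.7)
The strategy is to apply Theorem~\ref{main2} after a normalization of coordinates, and then to rule out its alternative~(2) by exploiting the fact that the nodal set touches $\partial E$. Up to a rotation and translation of $\mathbb{R}^n$, I may assume $E=\{x_n\geq 0\}$, so that $\{u=0\}\subset\{x_n\geq 0\}$ and there is some $x_0=(x_0',0)\in\{u=0\}$. Since $\{u=0\}\cap\{x_n<0\}=\emptyset$ and the open half-space $\{x_n<0\}$ is connected, $u$ has constant sign there; after possibly replacing $u$ by $-u$ (which is still a solution of~\eqref{1} and has the same nodal set), I assume $u<0$ on $\{x_n<0\}$. Theorem~\ref{main2} then provides some $a\in\mathbb{R}$ with $u(x)\leq H(x_n+a)$ on $\R^n$, and either $u(x)\equiv H(x_n+a)$ (alternative~(1)) or $u(x)<H(x_n+a)$ strictly on $\R^n$ together with a sequence $(y_k)\subset\R^{n-1}\times\{0\}$ with $|y_k|\to\infty$ such that $u(\cdot+y_k)$ converges in $C^2_{\rm loc}(\R^n)$ to the function $x\mapsto H(x_n+a)$ (alternative~(2)).

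Evaluating the inequality $u\leq H(x_n+a)$ at $x_0$ gives $0=u(x_0)\leq H(a)$, hence $a\geq 0$. In alternative~(1), equality at $x_0$ forces $H(a)=0$ and therefore $a=0$, so $u(x)=H(x_n)$; undoing the normalization yields the one-dimensional form claimed in the corollary.

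To rule out alternative~(2), suppose it holds. Then $0=u(x_0)<H(a)$, so $a>0$. Pick any $c\in(-a,0)$, so that $c+a>0$ and $H(c+a)>0$. Writing $y_k=(y_k',0)$, the $C^2_{\rm loc}$ convergence evaluated at the point $c\,e_n$ gives
\[
u(y_k',c)=u(y_k+c\,e_n)\;\xrightarrow[k\to\infty]{}\;H(c+a)>0.
\]
But $c<0$ implies $(y_k',c)\in\{x_n<0\}$, where $u<0$, a contradiction. Hence alternative~(2) cannot occur, and reverting the initial sign choice if necessary yields $u(x)=H(x\cdot e+a)$ for some $a\in\R$ and some unit vector $e$ orthogonal to $\partial E$.

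The main obstacle is the elimination of alternative~(2), and it is handled by the one-point sign comparison above at a height $c$ strictly below $\partial E$, where $u$ is known to be negative while the asymptotic limit $H(c+a)$ is positive. The remaining ingredients (the constant-sign reduction on $\{x_n<0\}$, the rigidity $a=0$ in alternative~(1) via $H$ being strictly monotone with $H(0)=0$, and the symmetry $u\mapsto-u$) are routine.
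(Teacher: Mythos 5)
Your proposal is correct and follows essentially the same route as the paper: normalize so that $E=\{x_n\geq 0\}$ and $u<0$ on $\{x_n<0\}$, invoke Theorem~\ref{main2}, and rule out alternative~(2) by evaluating the translated solutions at a point of negative height (the paper uses $c=-a/2$, you allow any $c\in(-a,0)$), obtaining a positive limit $H(c+a)>0$ in contradiction with the sign of $u$ on $\{x_n<0\}$.
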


It also follows from Theorem~\ref{main2} that, in any dimension $n\ge1$, if the zero level set of a non-constant solution of~\eqref{1} is bounded in a unit direction $e$, then $u$ is identically equal to $H(\pm x\cdot e+a)$ for some $a\in\R$. This last result (see Corollary~\ref{cor3} at the end of Section~\ref{sec2} below) corresponds to Theorem~1.1 obtained by Farina~\cite{Farina03}. We provide in Section~\ref{sec2} another proof using Theorem~\ref{main2}. We remark also that this result points out the difference between minimal surfaces and level sets of Allen-Cahn solutions from the point of view of half-space theorems: for $n\geq 4$ the minimal catenoid is contained in a slab, while entire solutions of the Allen-Cahn equation cannot have the zero level set contained in a slab, unless they are constant or equal to $H(x_n)$ up to translation and rotation of the variables. In particular, the zero level set of the solutions obtained in \cite{ADW} is not contained in a slab, although such level set approaches the minimal catenoid in a compact region.

It should be interesting to link such kind of half-space results to the De Giorgi conjecture. In this sense, an open question is to understand if Theorem~\ref{main} can be true in all dimensions, at least with the hypothesis of the monotonicity of the solution in one direction. Note, in particular, that the zero level sets of the $x_n$-monotone and non-planar solutions of~\eqref{1} in $\R^n$ with $n\ge9$, constructed by Del Pino, Kowalczyk and the fifth author in~\cite{M2}, are not included in any half-space.

\begin{remark}{\rm Theorem~$\ref{main}$ and Corollary~$\ref{cor2}$, as well as Corollary~$\ref{cor3}$ below, do not hold good if the zero level set $\{u=0\}$ is replaced by another level set $\{u=\mu\}$ with $\mu\neq0$. Similarly, Theorem~$\ref{main2}$ does not hold good either if one assumes that $u<\mu$ in $\{x_n<0\}$ for some $\mu>0$. Indeed, equation~\eqref{1} admits solutions $u_L$ which vanish on $(L\mathbb{Z})^n$ with $L>\pi\sqrt{n}$, are $2L$-periodic with respect to each variable $x_i$, which satisfy $\max_{\R^n}|u_L|\to 0$ as $L\displaystyle{\mathop{\to}^>}\pi\sqrt{n}$, and which are not one-dimensional!}
\end{remark}

\noindent{\bf{Outline of the paper.}} Section~\ref{sec2} is devoted to the proof of Theorem~\ref{main2} and its corollaries. The proof of Theorem~\ref{main2} is itself used in the proof of Theorem~\ref{main} and  Corollary \ref{strong} done in Section~\ref{sec3}. Lastly, Section~\ref{free} is concerned with a half-space theorem for a related free boundary problem.


\section{Half-space theorems in general dimension: proof of Theorem~\ref{main2} and its corollaries}\label{sec2}

The half-space theorem of minimal hypersurfaces in $\mathbb{R}^{n}$ is not true
when $n\geq4,$ because the higher dimensional catenoids lie in a half-space.
For the Allen-Cahn equation~\eqref{1}, we still do not know whether there is version of
the half-space theorem in dimension $n\geq4$. We have obtained partial
classification results in this direction, based on the maximum principle.

\medskip

We start this section with a general property holding in any dimension $n\ge1$. This can essentially be found in~\cite[Lemmas~3.2 and~3.3]{BCN2} and~\cite[Lemma~2.3]{Farina0}.

\begin{proposition}\label{lemdist}
Let $n\ge 1$ and $u$ be a non-constant solution of~\eqref{1} in any dimension $n\ge1$. Then $\{u=0\}\neq\emptyset$ and $|u(x)|\to1$ as ${\rm{dist}}(x,\{u=0\})\to+\infty$.
\end{proposition}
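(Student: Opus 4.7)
The plan is to treat both assertions by a single translation-compactness argument combined with a first-eigenvalue comparison on large balls. The nonemptiness of $\{u=0\}$ is easy: if this set were empty then $u$ would have constant sign in $\R^n$ and, up to replacing $u$ by $-u$, one could assume $u>0$ in $\R^n$; the Liouville-type result recalled in the introduction (see, e.g., \cite[Theorem~1.1]{Farina03}) would then force $u\equiv 1$, contradicting non-constancy.

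For the main statement I argue by contradiction: suppose there exist $\delta\in(0,1)$ and a sequence $(x_k)_{k\in\N}$ with ${\rm dist}(x_k,\{u=0\})\to+\infty$ and $|u(x_k)|\le 1-\delta$ for every $k$. Since $|u|\le 1$ and $u$ solves~\eqref{1}, interior elliptic estimates bound the translates $u_k(\cdot):=u(\cdot+x_k)$ in $C^{2,\alpha}_{loc}$, so along a subsequence $u_k\to u_\infty$ in $C^2_{loc}(\R^n)$, with $u_\infty$ an entire solution of~\eqref{1} satisfying $|u_\infty(0)|\le 1-\delta$. For every fixed $R>0$ and all large $k$, the ball $B_R(x_k)$ is disjoint from $\{u=0\}$, so $u$ has constant sign on $B_R(x_k)$; passing to the limit, $u_\infty$ has constant sign on $\R^n$, and replacing $u$ by $-u$ if needed I may assume $u_\infty\ge 0$.

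The strong maximum principle applied to the equation $-\Delta u_\infty+(u_\infty^2-1)u_\infty=0$ (bounded coefficient) leaves only two cases. Either $u_\infty>0$ everywhere, in which case the Liouville-type result applied to $u_\infty$ forces $u_\infty\equiv 1$, contradicting $u_\infty(0)\le 1-\delta$; or $u_\infty\equiv 0$, in which case $u_k\to 0$ uniformly on compact subsets and, since $B_R(x_k)$ is disjoint from $\{u=0\}$ with $u_\infty\ge 0$, $u$ is positive on $B_R(x_k)$ with $0<u\le\tfrac{1}{2}$ there for $k$ large. Consequently
\[
-\Delta u=u(1-u^{2})\ge\tfrac{3}{4}\,u\quad\text{in }B_R(x_k),
\]
and multiplying this pointwise inequality by $\phi_1^2/u$ (where $\phi_1>0$ is the first Dirichlet eigenfunction of $-\Delta$ on $B_R$) and integrating by parts yields the classical bound $\lambda_1(B_R)\ge 3/4$. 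Choosing $R$ large enough that $\lambda_1(B_R)<3/4$ produces the required contradiction.

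The main obstacle is this degenerate case $u_\infty\equiv 0$: collapse to the unstable equilibrium is not excluded by the strong maximum principle alone, and the contradiction has to be forced by exploiting the positivity of the linearized spectrum of the reaction at $0$, captured here by the eigenvalue comparison on arbitrarily large balls.
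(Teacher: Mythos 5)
Your proof is correct, but it follows a genuinely different route from the paper's. The paper argues directly and quantitatively: if ${\rm{dist}}(x,\{u=0\})>R$, then $u$ has a sign on $\overline{B_R(x)}$ and, sweeping the subsolutions $\varepsilon\varphi_R(\cdot-x)$ for $0\le\varepsilon\le\sqrt{1-\lambda_R}$ (with $\varphi_R$ the principal Dirichlet eigenfunction of $B_R$ normalized by $\varphi_R(0)=1$ and $\lambda_R=\lambda_1/R^2$) underneath $u$, it obtains the explicit bound $|u(x)|>\sqrt{1-\lambda_R}$, whence the conclusion as $R\to+\infty$, with no compactness argument and no Liouville theorem beyond the introductory remarks (which, as in your first paragraph, also settle $\{u=0\}\neq\emptyset$). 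You instead argue by contradiction via translation compactness, classify the limit $u_\infty$ (either $\equiv\pm1$ by the Liouville-type result, or $\equiv 0$), and exclude the degenerate case $u_\infty\equiv0$ by testing $-\Delta u\ge\tfrac34 u$ against $\phi_1^2/u$, which forces $\lambda_1(B_R)\ge\tfrac34$; this is the same spectral mechanism as in the paper, used in dual (supersolution) form. One small repair: when $u_\infty\equiv0$, the inequality $u_\infty\ge0$ does not determine the sign of $u$ on $B_R(x_k)$, which may alternate in $k$; extract first a subsequence along which that sign is constant (or run the eigenvalue comparison with $-u$ on the balls where $u<0$), after which everything goes through, since your test-function computation only needs $0<|u|\le\tfrac12$ on $\overline{B_R(x_k)}$. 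What the paper's direct argument buys is brevity and a quantitative rate, $|u(x)|\ge\sqrt{1-\lambda_1/R^2}$ whenever ${\rm{dist}}(x,\{u=0\})>R$; what yours buys is a softer, more modular proof that isolates the two ingredients at play, the instability of the zero state (via the eigenvalue bound on large balls) and the Liouville classification of signed entire solutions.
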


\begin{proof}
The proof is standard, we briefly sketch it for the sake of completeness. We recall from the introduction that $u$ necessarily satisfies $-1<u<1$ in $\R^n$ and $\{u=0\}\neq\emptyset$. Consider any $R>0$ and $x\in\R^n$ such that
$${\rm{dist}}(x,\{u=0\})>R$$
(hence, $\overline{B(x,R)}\cap\{u=0\}=\emptyset$).

Let $\lambda_R$ be the principal eigenvalue of $-\Delta$ in the ball $B_{R}$ with Dirichlet boundary condition, that is, there is a function $\varphi_R\in C^{2}(\overline{B_{R}})$ such that $-\Delta\varphi_R=\lambda_R\varphi_R$ in $\overline{B_{R}}$, $\varphi_R=0$ on~$\partial B_{R}$, and $\varphi_R>0$ in~$B_{R}$. From the classical radial symmetry result~\cite{GNN}, the function $\varphi_R$ is radially symmetric and decreasing with respect to the distance from the origin. Therefore, up to multiplication by a positive constant, one can assume without loss of generality that $\varphi_R(0)=1=\max_{\overline{B_R}}\varphi_R$. Notice also that $\lambda_R=\lambda_1/R^2$. Since the conclusion is concerned with the limit as ${\rm{dist}}(x,\{u=0\})\to+\infty$, one can assume without loss of generality that $R>0$ is large enough so that $0<\lambda_R<1$.

In the closed ball $\overline{B(x,R)}$, the continuous function $u$ does not vanish. Up to changing~$u$ into~$-u$, let us assume without loss of generality that $u>0$ in $\overline{B(x,R)}$. There exists then $\varepsilon_0$ such that $\varepsilon\varphi_R(\cdot-x)<u$ in $\overline{B(x,R)}$, for all $\varepsilon\in[0,\varepsilon_0]$. Furthermore, for every $\varepsilon\in[0,\sqrt{1-\lambda_R}]$, the function $\varepsilon\varphi_R(\cdot-x)$ satisfies
$$\Delta(\varepsilon\varphi_R(\cdot-x))+\varepsilon\varphi_R(\cdot-x)-(\varepsilon\varphi_R(\cdot-x))^3=\varepsilon\varphi_R(\cdot-x)\times(1-\lambda_R-(\varepsilon\varphi_R(\cdot-x))^2)\ge0$$
in $\overline{B(x,R)}$, since $0\le\varphi_R\le 1$ in $\overline{B_R}$. As a consequence, for any such $\varepsilon$, the function $\varepsilon\varphi_R(\cdot-x)$ is a subsolution of~\eqref{1} in the closed ball $\overline{B(x,R)}$ and it vanishes on $\partial B(x,R)$, while $u>0$ in $\overline{B(x,R)}$. It follows from the strong maximum principle that $\varepsilon\varphi_R(\cdot-x)<u$ in $\overline{B(x,R)}$ for every $\varepsilon\in[0,\sqrt{1-\lambda_R}]$. In particular, $u(x)>\sqrt{1-\lambda_R}$ since $\varphi_R(0)=1$. Since $\lambda_R=\lambda_1/R^2\to0$ as $R\to+\infty$ and $-1<u<1$ in $\R^n$, the conclusion follows.
\end{proof}

\begin{remark}{\rm Proposition~$\ref{lemdist}$ implies that $\sup_{\R^n}|u|=1$ if $\sup_{x\in\R^n}{\rm{dist}}(x,\{u=0\})=+\infty$. However, remember that the property $\sup_{x\in\R^n}{\rm{dist}}(x,\{u=0\})=+\infty$ is not always satisfied, since the Allen-Cahn equation admits non-trivial periodic solutions $u$ oscillating around the value $0$, and for which $\sup_{\R^n}|u|<1$.}
\end{remark}

We are now in position to prove our Theorem~$\ref{main2}$.

\begin{proof}[Proof of Theorem~$\ref{main2}$]  Throughout the proof, $u$ is a non-constant solution of~\eqref{1} such that
$$u<0\ \hbox{ in }\mathbb{R}^{n}_{-}=\{x_n<0\}.$$
As recalled in the introduction, we know that $-1<u<1$ in $\R^n$.

By Proposition~$\ref{lemdist}$, we have that
\begin{equation}
u(x_{1},\cdots,x_{n})\rightarrow-1\ \hbox{ as }x_{n}\rightarrow-\infty
\hbox{ uniformly in }(x_{1},\cdots,x_{n-1})\in\mathbb{R}^{n-1}.
\label{uniform}%
\end{equation}
since the function $u$ is negative in $\mathbb{R}^n_-$.

Denote $U(x)=H(x_n)=\tanh(x_{n}/\sqrt{2})$ and
\[
U^{\omega}(x)=U(x_{1},\cdots,x_{n-1},x_{n}+\omega)=H(x_n+\omega)=\tanh\left(  \frac
{x_{n}+\omega}{\sqrt{2}}\right)
\]
for $x\in\mathbb{R}^{n}$ and $\omega\in\mathbb{R}$. We shall now show that
$u\le U^{\omega}$ in $\mathbb{R}^{n}$ for all $\omega$ large enough. To do so,
let $A>0$ be such that
\begin{equation}
\label{uU}u\le-\frac{1}{\sqrt{3}}\ \hbox{ in }\mathbb{R}^{n-1}\!\times
\!(-\infty,-A]\ \ \hbox{ and }\ \ U\ge\frac{1}{\sqrt{3}}
\ \hbox{ in }\mathbb{R}^{n-1}\!\times\![A,+\infty).
\end{equation}
We claim that
\[
u\le U^{\omega}\ \hbox{ in }\mathbb{R}^{n}\ \hbox{ for all }\omega\ge2A.
\]
To do so, pick any $\omega\in[2A,+\infty)$. We shall prove that $u\le
U^{\omega}$ in $\mathbb{R}^{n-1}\times(-\infty,-A]$. Assume by way of
contradiction that $M:=\sup_{\mathbb{R}^{n-1}\times(-\infty,-A]}(u-U^{\omega
})>0$. Then there is a sequence $(z_{k})_{k\in\mathbb{N}}=(z^{\prime}_{k},z_{k,n})_{k\in\mathbb{N}}$
in $\mathbb{R}^{n-1}\times(-\infty,-A]$ such that
\[
u(z_{k})-U^{\omega}(z_{k})\to M>0\ \hbox{ as }k\to+\infty.
\]
By~\eqref{uniform}, the sequence $(z_{k,n})_{k\in\mathbb{N}}$ is then bounded.
Furthermore, by uniform continuity of~$U$ (or of $u$), property~\eqref{uU} and
the assumption~$\omega\ge2A$ imply that
\[
\limsup_{k\to+\infty}z_{k,n}<-A.
\]
Therefore, there is $\zeta\in(-\infty,-A)$ such that, up to extraction of a
subsequence, $z_{k,n}\to\zeta$ as $k\to+\infty$. Up to extraction of another
subsequence, the functions $x\mapsto u(x^{\prime}+z^{\prime}_{k},x_{n})$
converge in $C^{2}_{loc}(\mathbb{R}^{n})$ as $k\to+\infty$ to a solution
$w_{\infty}$ of~\eqref{1} such that $w_{\infty}\le-1/\sqrt{3}$ and
$w_{\infty}-U^{\omega}\le M$ in $\mathbb{R}^{n-1}\times(-\infty,-A]$, while
$w_{\infty}(0,\zeta)-U^{\omega}(0,\zeta)=M$. At the (interior) point
$(0,\zeta)\in\mathbb{R}^{n-1}\times(-\infty,-A)$, there holds
\begin{equation}
\label{winfty}0\ge\Delta(w_{\infty}-U^{\omega})(0,\zeta)=-w_{\infty}%
(0,\zeta)+w_{\infty}(0,\zeta)^{3}+U^{\omega}(0,\zeta)-(U^{\omega}%
(0,\zeta))^{3}.
\end{equation}
But $-1<U^{\omega}(0,\zeta)<U^{\omega}(0,\zeta)+M=w_{\infty}(0,\zeta
)\le-1/\sqrt{3}$ and the function $s\mapsto s-s^{3}$ is decreasing in
$[-1,-1/\sqrt{3}]$. Hence the right-hand side of~\eqref{winfty} is positive, a
contradiction. Therefore,
$$\sup_{\mathbb{R}^{n-1}\times(-\infty,-A]}(u-U^{\omega})\le0,$$
that is, $u\le U^{\omega}$ in $\mathbb{R}^{n-1}\times(-\infty,-A]$. Similarly,
since $U^{\omega}\ge1/\sqrt{3}$ in $\mathbb{R}^{n-1}\times[-A,+\infty)$ and
$U^{\omega}\ge u$ on $\mathbb{R}^{n-1}\times\{-A\}$, while the function
$s\mapsto s-s^{3}$ is decreasing in $[1/\sqrt{3},1]$, one can show that $u\le
U^{\omega}$ in $\mathbb{R}^{n-1}\times[-A,+\infty)$. Finally,
\[
u\le U^{\omega}\ \hbox{ in }\mathbb{R}^{n}\ \hbox{ for all }\omega\ge2A.
\]

Define now
\[
a=\inf\big\{\omega\in\mathbb{R}: u\le U^{\omega}\hbox{ in }\mathbb{R}%
^{n}\big\}.
\]
The previous paragraph yields $a\le2A$. On the other hand, since
$U^{\omega}\to-1$ as $\omega\to-\infty$ (at least) pointwise in $\mathbb{R}%
^{n}$, while $u>-1$ in $\mathbb{R}^{n}$, one infers that $a\in\mathbb{R}$.
By continuity, there holds $u\le U^{a}$ in $\mathbb{R}^{n}$, that is,
\begin{equation}
\label{inequtanh}u(x)\le H(x_n+a)
\ \hbox{ for all }x\in\mathbb{R}^{n}.
\end{equation}
This statement corresponds to the first part of the conclusion of
Theorem~\ref{main2}.

Let us now show the second part of the conclusion. First of all, if there is a
point $x^{*}\in\mathbb{R}^{n}$ such that $u(x^{*})=U^{a}(x^{*})=H(x^*_n+a)$, then the
strong maximum principle implies that $u\equiv U^{a}$ in $\mathbb{R}^{n}$,
that is,
\[
u(x)=H(x_n+a) \ \hbox{ for all }x\in
\mathbb{R}^{n}.
\]

Let us then assume in the sequel that $u<U^{a}$ in $\mathbb{R}^{n}$, that
is,
\begin{equation}\label{strict}
u(x)<U^a(x)=H(x_n+a)
\ \hbox{ for all }x\in\mathbb{R}^{n}.
\end{equation}
Let $B>0$ be such that $U^{a}\ge2/3\,(>1/\sqrt{3})$ in
$\mathbb{R}^{n-1}\times[B,+\infty)$. We claim that
\begin{equation}
\label{AB}\sup_{\mathbb{R}^{n-1}\times[-A,B]}\big(u-U^{a})=0.
\end{equation}
Indeed, otherwise, one would have $\sup_{\mathbb{R}^{n-1}\times[-A,B]}
(u-U^{a})<0$ and, by uniform continuity of $U$, there
would exist $\omega\in(-\infty,a)$ such that $u\le U^{\omega}$ in
$\mathbb{R}^{n-1}\times[-A,B]$ and $U^{\omega}\ge1/\sqrt{3}$ in
$\mathbb{R}^{n-1}\times[B,+\infty)$. With the same arguments as in the previous
paragraph, one gets that $u\le U^{\omega}$ in $\mathbb{R}^{n-1}\times
(-\infty,-A]$ and $u\le U^{\omega}$ in $\mathbb{R}^{n-1}\times[B,+\infty)$. As
a consequence, $u\le U^{\omega}$ in $\mathbb{R}^{n}$, contradicting the
minimality of $a$. Therefore,~\eqref{AB} holds.

From~\eqref{AB}, one infers the existence of a sequence
$(\xi_{k})_{k\in\mathbb{N}}=(\xi^{\prime}_{k},\xi_{k,n})_{k\in\mathbb{N}}$ in
$\mathbb{R}^{n-1}\times[-A,B]$ such that
\begin{equation}
\label{uxin}u(\xi_{k})-U^{a}(\xi_{k})\to0\ \hbox{ as }k\to+\infty.
\end{equation}
Up to extraction of a subsequence, one can assume that $\xi_{k,n}\to\xi_{\infty,n}$
as $k\to+\infty$, for some~$\xi_{\infty,n}\in[-A,B]$. Notice that
$|\xi^{\prime}_{k}|\to+\infty$ as $k\to+\infty$,
since otherwise there would exist a point $\xi\in\mathbb{R}^{n-1}\times[-A,B]$
such that $u(\xi)=U^{a}(\xi)$, contradicting~\eqref{strict}. Denote
\[
y_{k}=(\xi^{\prime}_{k},0)\in\R^{n-1}\times\{0\}\ \hbox{ and }
\ u_{k}(x)=u(x+y_{k})\ \hbox{ for }k\in\mathbb{N}\hbox{ and }x\in\mathbb{R}^{n}.
\]
To complete the proof of Theorem~\ref{main2}, we just need to show that
\[
u_{k}(x)\to H(x_n+a)
\]
in $C^2_{loc}(\mathbb{R}^{n})$ as $k\to+\infty$. Up to extraction
of a subsequence, the functions $u_{k}$ converge in~$C^{2}_{loc}(\mathbb{R}^{n})$
to a solution $u_{\infty}:\mathbb{R}^{n}\to[-1,1]$
of~\eqref{1} such that $u_{\infty}(x)\le H(x_n+a)$ in
$\mathbb{R}^{n}$ from~\eqref{inequtanh} and the definition of $y_{k}$.
Furthermore, $u_{\infty}(0,\xi_{\infty,n})=U^{a}(0,\xi_{\infty,n})=H(\xi_{\infty,n}+a)$ by~\eqref{uxin}.
It then follows from the strong maximum
principle that $u_{\infty}(x)=H(x_n+a)$ for all~$x\in
\mathbb{R}^{n}$. Furthermore, the limit of the functions $u_{k}$ being
independent of the subsequence, one concludes that the whole sequence
$(u_{k})_{k\in\mathbb{N}}$ converges to the function $x\mapsto H(x_n+a)$
in~$C^{2}_{loc}(\mathbb{R}^{n})$ as $k\to+\infty$. The proof of
Theorem~\ref{main2} is thereby complete.
\end{proof}

\begin{remark}\label{remexp}{\rm In Theorem~$\ref{main2}$, one has $-1<u(x)<H(x_n+a)=\tanh((x_n+a)/\sqrt{2})$ for all~$x\in\R^n$. Modica's inequality $|\nabla u|^2\le(1-u^2)^2/2$ (see~\cite{Modica}) then yields
\begin{equation}\label{ineqexp}
|\nabla u(x)|\le 2\,\sqrt{2}\,e^{\sqrt{2}\,(x_n+a)}\hbox{ for all }x\in\R^n.
\end{equation}}
\end{remark}

\medskip

By changing $u$ into $-u$ in Theorem~\ref{main2}, the following result immediately follows.

\begin{theorem}\label{main3}
Let $n\ge 1$ and $u$ be a non-constant solution of the Allen-Cahn
equation~\eqref{1} in~$\mathbb{R}^n$. If $u>0$ in the
half-space $\{x_n<0\}$, then there exists $a\in
\mathbb{R}$ such that
\[
u(x) \geq H(-x_n+a)
\]
for all $x \in\mathbb{R}^{n}$, and either $u(x) = H(-x_n+a)$ for all $x\in\mathbb{R}^n$, or
$u(x) > H(-x_n+a)$ for all~$x\in\mathbb{R}^n$ and there exists a sequence $(y_{k})_{k\in\mathbb{N}}$ in
$\mathbb{R}^{n-1}\times\{0\}$ such that $|y_k|\to+\infty$ as~$k\to+\infty$, and the
functions $u(\cdot+y_k)$ converge in $C^2_{loc}(\mathbb{R}^{n})$ to the function
$x\mapsto H(-x_n+a)$ as~$k\to+\infty$.
\end{theorem}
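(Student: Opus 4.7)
The plan is a direct reduction to Theorem~\ref{main2} via the involution $u\mapsto -u$, which preserves equation~\eqref{1} because the nonlinearity $f(s)=s-s^3$ is odd. Concretely, I would set $v:=-u$ and verify that $v$ is a non-constant solution of~\eqref{1}: $-\Delta v=\Delta u=u^3-u=v-v^3$. The hypothesis $u>0$ in $\{x_n<0\}$ translates into $v<0$ in $\{x_n<0\}$, so Theorem~\ref{main2} applies to $v$ and yields some $b\in\mathbb{R}$ with $v(x)\le H(x_n+b)$ for every $x\in\mathbb{R}^n$, together with the dichotomy: either $v(x)=H(x_n+b)$ everywhere, or the inequality is strict everywhere and there is a sequence $(y_k)_{k\in\mathbb{N}}$ in $\mathbb{R}^{n-1}\times\{0\}$ with $|y_k|\to+\infty$ such that $v(\cdot+y_k)\to H(\cdot_n+b)$ in $C^2_{loc}(\mathbb{R}^n)$.

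Next I would transport these conclusions back to $u$ using the oddness of $H(t)=\tanh(t/\sqrt{2})$, namely $-H(t)=H(-t)$. Setting $a:=-b$, multiplying the bound $v\le H(x_n+b)$ by $-1$ gives
\[
u(x)=-v(x)\ge -H(x_n+b)=H(-x_n-b)=H(-x_n+a)
\]
for all $x\in\mathbb{R}^n$, which is the asserted lower bound. In the rigid alternative, $v\equiv H(\cdot_n+b)$ becomes $u\equiv H(-\cdot_n+a)$. In the strict alternative, $v<H(\cdot_n+b)$ pointwise becomes $u>H(-\cdot_n+a)$ pointwise, and the $C^2_{loc}$ convergence $v(\cdot+y_k)\to H(\cdot_n+b)$ gives, upon multiplying by $-1$, the $C^2_{loc}$ convergence $u(\cdot+y_k)\to H(-\cdot_n+a)$ along the same sequence $(y_k)$, with $y_k\in\mathbb{R}^{n-1}\times\{0\}$ and $|y_k|\to+\infty$.

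There is essentially no obstacle in this argument: it is a one-step corollary of Theorem~\ref{main2} based on the $\mathbb{Z}/2$-symmetry of the equation. The only point requiring a moment of care is bookkeeping the sign change, namely that the correct translation parameter for the lower barrier of $u$ is $a=-b$, which is forced by the identity $-H(t)=H(-t)$.
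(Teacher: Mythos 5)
Your proposal is correct and is exactly the paper's argument: the paper obtains Theorem~\ref{main3} "by changing $u$ into $-u$ in Theorem~\ref{main2}", which is precisely your reduction via the oddness of $s\mapsto s-s^3$ and of $H$. The sign bookkeeping ($a=-b$, using $-H(t)=H(-t)$) is handled correctly, so nothing is missing.
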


Let us now turn to the proof of Corollaries~\ref{cor2} and~\ref{cor4}, which follow from Theorems~\ref{main2} and~\ref{main3}.

\begin{proof}[Proof of Corollary~$\ref{cor2}$]
Assume by way of contradiction that $u$ is a non-constant solution of~\eqref{1} with~$\{u=0\}$ contained in a non-degenerate cone. Then, up to changing $u$ into $-u$, and up to translation and rotation of the variables, it follows that
\begin{equation}\label{hypcone}
u<0\ \hbox{ in }\big\{(x',x_n)\in\R^{n-1}\times\R:x_n<\beta\,|x'|\big\},
\end{equation}
for some $\beta>0$. Theorem~\ref{main2}, together with~\eqref{hypcone}, then yields the existence of a real number~$a$ such that $u(x)=H(x_n+a)$ for all $x\in\R^n$, which leads to a contradiction.
\end{proof}

\begin{proof}[Proof of Corollary~$\ref{cor4}$]
Up to changing $u$ into $-u$ and up to translation and rotation of the variables, one can also assume without loss of generality that $e=(0,\cdots,0,1)$, that~$u<0$ in $\{x_n<0\}$ and that $u(x',0)=0$ for some $x'\in\R^{n-1}$. Theorem~\ref{main2} then implies that $u(x)\le H(x_n+a)$ in $\R^n$, for some $a\in\R$, and the other parts of the conclusion hold for that real number~$a$. We claim that $u(x',0)=H(a)$. Indeed, if not, then $0=u(x',0)<H(a)$, hence $a>0$, while Theorem~\ref{main2} also yields the existence of a sequence $(y_k)_{k\in\N}=(y'_k,0)_{k\in\N}$ in $\R^{n-1}\times\{0\}$ such that $u(\cdot+y_k)\to H(x_n+a)$ in $C^2_{loc}(\R^n)$ as $k\to+\infty$. In particular, $u(y'_k,-a/2)\to H(a/2)>0$ as $k\to+\infty$, hence $u(y'_k,-a/2)>0$ for all $k$ large enough, which is impossible since $u<0$ in $\{x_n<0\}$. Therefore, $u(x',0)=H(a)$ (hence, $a=0$) and $u(x)\equiv H(x_n)$ in $\R^n$ from Theorem~\ref{main2}.
\end{proof}

Lastly, as announced at the end of Section~\ref{sec1}, we can retrieve from Theorem~\ref{main2} the one-dimensional property of any non-constant solution of~\eqref{1} whole zero level set is contained in a slab. This result was obtained by Farina~\cite[Theorem~1.1]{Farina03}.

\begin{corollary}\label{cor3}{\rm{\cite[Theorem~1.1]{Farina03}}}
Let $n\ge 1$ and $u$ be a non-constant solution of the Allen-Cahn equation~\eqref{1} in~$\mathbb{R}^{n}$. If $\{u=0\}$ is contained in a slab $\{x\in\R^n:|x\cdot e|<A\}$ for some unit vector~$e$ and some real number $A>0$, then there exists $a\in\mathbb{R}$ such that either $u(x)=H(-x\cdot e+a)$ or $u(x)=H(x\cdot e+a)$, for all $x\in\mathbb{R}^{n}$.
\end{corollary}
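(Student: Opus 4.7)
The plan is to combine Theorems~\ref{main2} and~\ref{main3} with Corollary~\ref{cor4}, via a compactness argument along sequences that exhaust the zero set of $u$ in the direction~$e$. After a rotation I may assume $e=e_n$, so $\{u=0\}\subset\{|x_n|<A\}$, and $u$ has constant sign on each of the connected closed half-spaces $\{x_n\geq A\}$ and $\{x_n\leq-A\}$. I first rule out the \emph{same-sign case} (say $u>0$ on both, the other subcase being symmetric under $u\mapsto -u$): the translate $\hat u(x)=u(x',x_n-A)$ satisfies $\hat u>0$ on $\{x_n<0\}$, so Theorem~\ref{main3} applies. Its alternative~(1) is excluded, since the resulting profile $H(-x_n+c)$ is negative for $x_n$ large while $u>0$ on $\{x_n\geq A\}$; thus alternative~(2) produces a sequence $z_k\in\mathbb{R}^{n-1}\times\{0\}$ with $|z_k|\to+\infty$ along which $u(\cdot+z_k)\to H(-x_n+c)$ in $C^2_{loc}$. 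Picking any $x$ with $x_n\geq A$ and $x_n>c$, the limit is negative at a point where $u(\cdot+z_k)$ remains positive, a contradiction.

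In the \emph{opposite-sign case}, up to the reflection $x_n\mapsto -x_n$ I may further assume $u<0$ on $\{x_n\leq -A\}$ and $u>0$ on $\{x_n\geq A\}$. The translate $\tilde u(x)=u(x',x_n-A)$ then satisfies $\tilde u<0$ on $\{x_n<0\}$, and Theorem~\ref{main2} yields some $c_+\in\mathbb{R}$ such that $u\leq H(\cdot_n+c_+)$; if alternative~(1) of Theorem~\ref{main2} holds, then $u\equiv H(\cdot_n+c_+)$ and we are done. Otherwise I introduce $\alpha:=\sup\{x_n:u(x',x_n)=0\text{ for some }x'\}\in[-A,A]$. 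If $\alpha$ is attained by a point of $\{u=0\}$, then $\{u=0\}$ sits in the closed half-space $E=\{x_n\leq\alpha\}$ and touches its boundary, so Corollary~\ref{cor4} applies and yields $u(x)=H(x_n-\alpha)$. Otherwise, I select a sequence $(x'_k,x_{k,n})\in\{u=0\}$ with $x_{k,n}\to\alpha$; continuity forces $|x'_k|\to+\infty$ (else a limit point would attain $\alpha$), and up to extraction the translates $u_k:=u(\cdot+(x'_k,0))$ converge in $C^2_{loc}$ to a solution $u_\infty$ with $u_\infty(0,\alpha)=0$. The upper bound $u\leq H(\cdot_n+c_+)$ prevents $u_\infty\equiv 0$, and the strong maximum principle then gives $u_\infty>0$ on $\{x_n>\alpha\}$, so that $\{u_\infty=0\}\subset\{x_n\leq\alpha\}$ and touches the boundary at $(0,\alpha)$; Corollary~\ref{cor4} applied to $u_\infty$ forces $u_\infty(x)=H(x_n-\alpha)$.

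The main obstacle is to promote the one-dimensional form of the limit $u_\infty$ into a one-dimensional form of $u$ itself in the non-attainment scenario. My plan is to repeat the argument at the lower extreme, with $\beta:=\inf\{x_n:u(x',x_n)=0\text{ for some }x'\}$ and Theorem~\ref{main3}, producing a second one-dimensional limit $u'_\infty(x)=H(x_n-\beta)$. The uniform limits $u\to\pm 1$ as $x_n\to\pm\infty$, immediate from the sandwich $H(\cdot_n+c_-)\leq u\leq H(\cdot_n+c_+)$ obtained by applying both theorems, permit a Berestycki--Caffarelli--Nirenberg sliding argument yielding strict monotonicity of $u$ in $x_n$, so that $\{u=0\}$ is a continuous graph $x_n=\phi(x')$ with values in $[\beta,\alpha]$. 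Combining this with the fact that in alternative~(2) the strict inequalities $u<H(\cdot_n+c_+)$ and $u>H(\cdot_n+c_-)$ force $\{u=0\}$ to avoid the hyperplanes $\{x_n=-c_\pm\}$, and pitting the two one-dimensional limits $H(\cdot_n-\alpha)$ and $H(\cdot_n-\beta)$ against the minimality of $c_+$ and the maximality of $c_-$, one aims to deduce $\alpha=\beta$; in that case $\{u=0\}$ reduces to a single hyperplane and $\alpha$ is necessarily attained, contradicting the non-attainment assumption and closing the proof via Corollary~\ref{cor4}. This collapse of the sandwich is the delicate step.
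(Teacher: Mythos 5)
Your reduction to the two-sided bound is essentially the paper's own route and is sound: after rotating so that $e=e_n$, you rule out the same-sign configuration (the paper does this implicitly, by noting $\sup_{\R^n}u=1$ while $u\le H(x_n+b)<1$ below the slab, hence $u>0$ above it), and you apply Theorem~\ref{main2} to a translate to get $u\le H(x_n+c_+)$; the lower bound $u\ge H(x_n+c_-)$ must come from applying Theorem~\ref{main2} to the reflected function $-u(x',-x_n+2A)$ (equivalently Theorem~\ref{main3} to a reflection), as the paper does — your phrase ``applying both theorems'' should be made explicit in this form, since neither theorem applies to $u$ itself from below. Up to this sandwich $H(x_n+c_-)\le u\le H(x_n+c_+)$, your argument (including the detour through $\alpha$, the limit $u_\infty$ and Corollary~\ref{cor4}, which is correct but not needed) is fine.

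The genuine gap is the last step, which you yourself flag as ``the delicate step'': passing from the sandwich to $u\equiv H(x_n+a)$. The paper closes this by quoting \cite[Theorem~3.1]{BH} or \cite[Theorem~2.1]{Farina0}: a solution squeezed between two translates of $H$ (equivalently, with uniform limits $\pm1$ as $x_n\to\pm\infty$) is itself a translate of $H$. Your substitute does not work as outlined. The facts you list — the limits $H(x_n-\alpha)$ and $H(x_n-\beta)$ along sequences, the avoidance of the hyperplanes $\{x_n=-c_\pm\}$, the minimality of $c_+$ and maximality of $c_-$, and even strict monotonicity of $u$ in $x_n$ — are all simultaneously consistent with $-c_+\le\beta\le\alpha\le-c_-$ and $c_-<c_+$, so no contradiction is produced and ``one aims to deduce $\alpha=\beta$'' is not an argument: the collapse $\alpha=\beta$ (or $c_-=c_+$) is exactly the content of the cited theorems, whose proofs require a genuine sliding comparison of $u$ with the family $H(x_n+\omega)$ exploiting the nondegeneracy of $\pm1$ (i.e. that $s\mapsto s-s^3$ is decreasing near $\pm1$), none of which appears in your outline. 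Moreover, the intermediate monotonicity claim is itself a nontrivial Gibbons-type theorem rather than a routine application of the sliding method, and monotonicity alone does not force $\alpha=\beta$. Either invoke the classification of solutions trapped between translates of $H$ (as the paper does), or supply such a sliding argument in full; as written, the proof is incomplete at its crucial point.
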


We here give a proof using Theorem~\ref{main2}. 

\begin{proof}
Up to changing $u$ into $-u$ and up to translation and rotation of the variables, one can also assume without loss of generality that $e=(0,\cdots,0,1)$, that $u<0$ in $\{x_n\le 0\}$ and that
$$\{u=0\}\subset\{0<x_n<2A\}.$$
It then follows from Theorem~\ref{main2} that there exists $b\in\R$ such that
$$u(x)\le H(x_n+b)$$
for all $x\in\R^n$ and either $u(x)=H(x_n+b)$ for all $x\in\R^n$, or $u(\cdot+y_k)\to H(x_n+b)$ in~$C^2_{loc}(\R^n)$ as $k\to+\infty$, for some sequence $(y_k)_{k\in\N}$ in $\R^{n-1}\times\{0\}$. In both cases, one has~$\sup_{\R^{n-1}\times(-\infty,2A]}u\le H(2A+b)<1$ and $\sup_{\R^n}u=\sup_{\R}H=1$. By continuity, one infers that $u>0$ in $\{x_n\ge 2A\}$. From Theorem~\ref{main2} applied to the solution
$$x=(x',x_n)\mapsto-u(x',-x_n+2A),$$
there exists then $c\in\R$ such that $-u(x',-x_n+2A)\le H(x_n+c)$ for all $x\in\R^n$, hence
$$u(x)\ge H(x_n-2A-c)$$
for all $x\in\R^n$. Finally $H(x_n-2A-c)\le u(x)\le H(x_n+b)$ for all $x\in\R^n$ and one concludes from~\cite[Theorem~3.1]{BH} or~\cite[Theorem~2.1]{Farina0} that $u(x)\equiv H(x_n+a)$ in $\R^n$, for some $a\in\R$.
\end{proof}


\section{Proof of the half-space theorem in dimensions $n=2,3$}\label{sec3}

As we mentioned in Section~\ref{sec1}, the proof of the half-space theorem for minimal
surfaces uses the family of catenoids and the sweeping principle. In the
Allen-Cahn case, the solutions are defined in the whole space, and it is not
easy to apply this idea.

We remark that  the $n=2$
case of Theorem~\ref{main} can also be proven by applying the method in De
Silva and Savin~\cite{DeS-S}. Our proof uses Pohozaev identity (also called
balancing condition, see~\cite{DKP}) and is very
different from theirs.

We shall prove Theorem~\ref{main} for $n=1,2,3$. Notice that if $n=1,$ then the solution $u$ is trivially one-dimensional and since it is not constant, it is then equal to $H(x_1)$ up to shifts, as follows directly from ODE analysis. The cases of $n=2,3$ are more complicated. Although we can deal with these two cases in a unified
way, we choose to first give a simple proof when $n=2$, because this
gives us a clear geometric intuition behind the whole proof.


\subsection{The case $n=2$}
Let $u$ be a solution whose zero level set $\{u=0\}$ is contained in the
half-space $\{x_2>0\}$. Up to changing $u$ into $-u$ and/or $x_1$ into $-x_1$, and shifting in the direction~$x_2$, we may assume without loss of generality that $u<0$ in $\{x_{2}<0\}$ and, from Theorem~\ref{main2}, that
\begin{equation}\label{uH}
u(x_1,x_2)\le H(x_2)\ \hbox{ for all }(x_1,x_2)\in\R^2
\end{equation}
and that there exists a sequence $(t_{k}^{+})_{k\in\N}$ such that
$t_{k}^{+}\rightarrow+\infty$ and $u(x_1+t^+_k,x_2)\to H(x_2)$ in $C^2_{loc}(\R^2)$ as $k\to+\infty$.

For each $x_{1}\in\R,$ we define
\[
g(x_{1})  =\inf\left\{  x_2\in\R:u(x_{1},x_2)  =0\right\}\ \in[0,+\infty].
\]
Note that the infimum is a minimum if $g(x_1)$ is a real number. Note also that $g(x_1)$ might a priori be $+\infty$ for some values $x_1$, in which case $u(x_1,x_2)<0$ for all $x_2\in\R$ (nevertheless, the conclusion $u(x_1,x_2)\equiv H(x_2)$ in $\R^2$ will show that this case is impossible). We know at this point that $g$ cannot be equal to $+\infty$ on $(-\infty,\xi)$ for some $\xi\in\R$ since otherwise the zero level set of $u$ would be included in the quarter-plane $\{x_1\ge\xi,\, x_2\ge0\}$, which is ruled out by Corollary~\ref{cor2}. Let us set
\[
\alpha=\underset{x_{1}\rightarrow-\infty}{{\lim\inf}}\, g(x_{1})\ \in[0,+\infty].
\]

Let us first consider the case $0\le\alpha<+\infty$. There exists then a sequence $(t_{k}^{-},s_{k}^{-})_{k\in\N}$ such that $u(t_{k}^{-},s_{k}^{-})=0$, and
$t_{k}^{-}\rightarrow-\infty$ and $s_{k}^{-}\rightarrow\alpha$ as $k\to+\infty$. Up to extraction of a subsequence, the functions $(x_1,x_2)\mapsto u(x_1+t^-_k,x_2+\alpha)$ converge in $C^2_{loc}(\R^2)$ to a classical solution $u_\infty$ of~\eqref{1} such that $u_\infty(0,0)=0$ and $u_\infty\le0$ in $\{x_2\le0\}$, owing to the definition of $\alpha$. Furthermore, $u_\infty(x_1,x_2)\le H(x_2+\alpha)$ in $\R^2$ from~\eqref{uH}, hence $u_\infty(x_1,-\infty)=-1$ for each $x_1\in\R$, and $u_\infty<0$ in $\{x_2<0\}$ from the strong maximum principle. Corollary~\ref{cor4} then implies that $u_\infty(x_1,x_2)\equiv H(x_2+b)$ in $\R^2$ for some real number $b$. Since $u_\infty(0,0)=0$, one finally infers that $b=0$ and $u_\infty(x_1,x_2)\equiv H(x_2)$ in $\R^2$.

Consider now the semi-infinite vertical strip
\[
\Omega_{k}:=\left\{(x_{1},x_{2})\in\R^2  :t_{k}^{-}<x_{1}<t_{k}^{+},-\infty<x_{2}<\alpha\right\}.
\]
Let $X=\left(  0,1\right)$. The balancing condition (see~\cite[Appendix]{DKP})
tells us that
\begin{equation}
\int_{\partial\Omega_{k}}\left[  \left(  \frac{1}{2}\left\vert \nabla
u\right\vert ^{2}+F\left(  u\right)  \right)  X\cdot\nu-\left(  \nabla u\cdot
X\right)  \left(  \nabla u\cdot\nu\right)  \right]  d\sigma=0, \label{b}%
\end{equation}
where
$$F(s)=\frac{(1-s^2)^2}{4}$$
and $\nu$ is the outward unit normal of the domain $\Omega_k$ (which is defined everywhere except at the corners $(t^\pm_k,\alpha)$. The previous formula means that
$$\begin{array}{l}
\displaystyle\int_{t^-_k}^{t^+_k}\left(\frac{|\nabla u(x_1,\alpha)|^2}{2}+F(u(x_1,\alpha))-u_{x_2}^2(x_1,\alpha)\right)\,dx_1\vspace{3pt}\\
\qquad\qquad\displaystyle+\underbrace{\int_{-\infty}^\alpha\big(u_{x_1}(t_k^-,x_2)\,u_{x_2}(t^-_k,x_2)-u_{x_1}(t_k^+,x_2)\,u_{x_2}(t^+_k,x_2)\big)\,dx_2}_{=:I_k}=0,\end{array}$$
where the second integral $I_k$ converges absolutely from Remark~\ref{remexp}. Since $u(x_1+t^-_k,x_2)\to H(x_2)$ and $u(x_1+t^-_k,x_2+\alpha)\to H(x_2)$ as $k\to+\infty$ in $C^2_{loc}(\R^2)$, together with Remark~\ref{remexp}, it follows that~$I_k\to0$ as $k\to+\infty$. Therefore,
$$\int_{t^-_k}^{t^+_k}\left(\frac{|\nabla u(x_1,\alpha)|^2}{2}+F(u(x_1,\alpha))-u_{x_2}^2(x_1,\alpha)\right)dx_1\to 0\ \hbox{ as }k\to+\infty.$$
On the other hand, Modica's
inequality
\[
F(u)\ge \frac{|\nabla u|^2}{2}\,\left(\ge \frac{u_{x_2}^2}{2}\right)
\]
(see~\cite{Modica}) implies that
$$\frac{|\nabla u|^2}{2}+F(u)-u_{x_2}^2=\frac{u_{x_1}^2}{2}+F(u)-\frac{u_{x_2}^2}{2}\ge\frac{u_{x_1}^2}{2}\ge0$$
in $\R^2$. Since $t^\pm_k\to\pm\infty$ as $k\to+\infty$, we finally get that
$$\frac{|\nabla u(x_1,\alpha)|^2}{2}+F(u(x_1,\alpha))-u_{x_2}^2(x_1,\alpha)=\frac{u_{x_1}^2(x_1,\alpha)}{2}=0$$
for all $x_1\in\R$, hence
\[
F(u(x_1,\alpha))=\frac{u_{x_2}^2(x_1,\alpha)}{2}=\frac{|\nabla u(x_1,\alpha)|^2}{2}
\]
for all $x_1\in\R$. One concludes from~\cite{CGS,Modica} that $u$ is one-dimensional, namely $u(x_1,x_2)\equiv H(x_2)$ in $\R^2$ from the first paragraph of the proof.

Let us finally consider the case $\alpha=+\infty$. Here, remembering also that $u<0$ in $\{x_2<0\}$, it follows from Proposition~\ref{lemdist} that
\[
\sup_{x_1\le -R,\,x_2\le0}u(x_1,x_2)\to-1
\]
as $R\to+\infty$, hence $|\nabla u(x_1,x_2)|\to0$ as $x_1\to-\infty$ uniformly with respect to $x_2\le0$, from standard elliptic estimates. Together with Remark~\ref{remexp}, this implies that
$$\int_{-\infty}^0u_{x_1}(x_1,x_2)\,u_{x_2}(x_1,x_2)\,dx_2\to0\ \hbox{ as }x_1\to-\infty.$$
Therefore, by applying~\eqref{b} in the region
\[
\left\{  \left(  x_{1},x_{2}\right)\in\R^2  :-k<x_{1}<t_{k}^{+},-\infty<x_{2}<0\right\}\, ,
\]
one gets with the same arguments as before that
$$\int_{-k}^{t^+_k}\left(\frac{|\nabla u(x_1,0)|^2}{2}+F(u(x_1,0))-u_{x_2}^2(x_1,0)\right)dx_1\to 0\ \hbox{ as }k\to+\infty.$$
This leads to the same conclusion as in the previous paragraph and the proof of Theorem~\ref{main} in the case $n=2$ is thereby complete.

\begin{proof}[Proof of Corollary \ref{strong}]
Let $u_1<u_2$ be two non-constant solutions of~\eqref{1} in
$\mathbb{R}^{2}$. Remember that $-1<u_1<u_2<1$ in $\R^2$. Using $u_1$ and $u_2$ as barriers and applying minimizing arguments, we can construct a stable solution $u_3$ of~\eqref{1} with
$$-1<u_1\leq u_3 \leq u_2<1\ \hbox{ in $\R^2$}.$$
In dimension two, stable solutions are one-dimensional, as follows from~\cite[Theorem~Ê1.8]{BCN}. The function $u_3$ is then one-dimensional stable and it takes values in $(-1,1)$, hence there exist then a unit vector $e$ and a real number $c$ such that $u_3(x)\equiv H(x\cdot e+c)$ in~$\R^2$. Therefore, the nodal set of $u_1$ is contained in the half-space $\{x\cdot e+c\ge0\}$ and $u_1<0$ in $\{x\cdot e+c<0\}$. By the weak half-space Theorem~\ref{main},~$u_1$ has to be one-dimensional, and more precisely there is $a\in\R$ such that $u_1(x)\equiv H(x\cdot e+a)$ in~$\R^2$. The same is true for~$u_2$, with $u_2(x)\equiv H(x\cdot e+b)$ in $\R^2$, for some real number $b$ such that $b>a$ (since $u_2>u_1$).
\end{proof}


\subsection{The case $n=3$}\label{n=3}

Next, we shall consider the case of dimension 3. The arguments in this section
can also be applied in the two dimensional case, but we preferred to use the more direct proof of the previous section in the case $n=2$.

First of all, as in the case $n=2$, up to changing $u$ into $-u$ and/or shifting in the direction $x_3$, we may assume without loss of generality that $u<0$ in $\{x_{3}<0\}$ and, from Theorem~\ref{main2}, that
\begin{equation}\label{uH2}
-1<u(x_1,x_2,x_3)\le H(x_3)\ \hbox{ for all }(x_1,x_2,x_3)\in\R^3
\end{equation}
and there is a sequence $(y_k)_{k\in\N}$ in $\R^2\times\{0\}$ such that $u(\cdot+y_k)\to H(x_3)$ in $C^2_{loc}(\R^3)$ as~$k\to+\infty$.

Now, let $A>0$ be such that
$$\tanh\left(\!-\frac{A}{\sqrt{2}}\right)\le-\sqrt{\frac{2}{3}}.$$
For $s>0$, let $\Omega_{s}$ be
the half-cylinder
\[
\Omega_s=\big\{ (x_1,x_2,x_3)\in\R^3  :x_1^{2}+x_2^{2}<s^2,\ x_3<-A\big\}
.
\]
Let $X=\left(  0,0,1\right)$. Then, for every $s>0$, the following balancing
formula (see~\cite{DKP}) holds in $\Omega_s$:
\begin{equation}\label{b2}
\int_{\partial\Omega_s}\left[  \left(  \frac{1}{2}\left\vert \nabla u\right\vert
^{2}+F\left(  u\right)  \right)  X\cdot\nu-\left(  \nabla u\cdot X\right)
\left(  \nabla u\cdot\nu\right)  \right]  d\sigma=0,
\end{equation}
meaning that
\begin{equation}\label{integrals}\begin{array}{rcl}
g(s) & \!\!\!\!:=\!\!\!\! & \displaystyle\!\!\int_{\{x_1^2+x_2^2<s^2\}}\!\!\!\left(\!\frac{|\nabla u(x_1,x_2,-A)|^2}{2}\!+\!F(u(x_1,x_2,-A))\!-\!u_{x_3}^2(x_1,x_2,-A)\!\right)dx_1dx_2\vspace{3pt}\\
& \!\!\!\!=\!\!\!\! & \displaystyle s\int_{-\infty}^{-A}\!\int_0^{2\pi}\!\big(u_{x_1}(s\cos\theta,s\sin\theta,x_3)\cos\theta+u_{x_2}(s\cos\theta,s\sin\theta,x_3)\sin\theta\big)\qquad\vspace{3pt}\\
& \!\!\!\!\!\!\!\! & \qquad\qquad\qquad\times\ u_{x_3}(s\cos\theta,s\sin\theta,x_3)\,d\theta\,dx_3.\end{array}
\end{equation}
Notice that the integrals converge absolutely from Remark~\ref{remexp}. As in the previous section, we infer from Modica's
inequality
\[
F(u)\ge\frac{|\nabla u|^2}{2}
\]
that
\begin{equation}\label{I}
\frac{|\nabla u|^2}{2}+F(u)-u_{x_3}^2=\frac{u_{x_1}^2+u_{x_2}^2}{2}+F(u)-\frac{u_{x_3}^2}{2}\ge\frac{u_{x_1}^2+u_{x_2}^2}{2}\ge0
\end{equation}
in $\R^3$. From this, we know that the function $g$ is nonnegative and nondecreasing in $(0,+\infty)$. Hence we can define
\begin{equation}\label{defalpha}
\alpha=\lim_{s\rightarrow+\infty}g(s)\ \in[0,+\infty] .
\end{equation}
Let us also define a function $K:(0,+\infty)\times\R\to\R$ by
\begin{equation}\label{defK}
K(s,x_3)  =\int_{\{x_1^2+x_2^2<s^2\}}\big(u_{x_1}^2(x_1,x_2,x_3)+u_{x_2}^2(x_1,x_2,x_3)\big)\,dx_1\,dx_2
\end{equation}
and notice from the definition of $g$ in~\eqref{integrals} and from~\eqref{I} that
\begin{equation}\label{gK}
K(s,-A)\le2\,g(s)\ \hbox{ for all }s>0.
\end{equation}

Remark~\ref{remexp} yields suitable
exponential decay of $u_{x_1}^2+u_{x_2}^2$ as $x_3\to-\infty$, from which we get the
following key-property of $K$.

\begin{lemma}
\label{K}There exist some constants $C>0$ and $\beta>0$, such that
\[
K(s,x_3)  \leq C\big(1+K(s+\beta\ln s,-A)\big)
,\text{ for all }s\geq1\hbox{ and }x_3\le-A.
\]
\end{lemma}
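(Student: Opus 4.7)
The key observation is that $w := u_{x_1}^2 + u_{x_2}^2$ is a nonnegative, bounded subsolution of the Yukawa-type operator $-\Delta + 2$ in the half-space $\Omega := \{x_3 < -A\}$; the plan is to bound it pointwise by the Poisson extension of its trace on $\{x_3 = -A\}$, then integrate over the disk of radius $s$ and split the boundary integral. Differentiating~\eqref{1} gives $-\Delta u_{x_i} = (1-3u^2)\, u_{x_i}$ for $i=1,2$, whence
\[\Delta w \;=\; 2(3u^2-1)\,w + 2\bigl(|\nabla u_{x_1}|^2 + |\nabla u_{x_2}|^2\bigr) \;\ge\; 2(3u^2-1)\,w.\]
By the choice of $A$, $u(x',x_3) \le H(x_3) \le H(-A) \le -\sqrt{2/3}$ for $x_3\le -A$, so $3u^2-1 \ge 1$ and $\Delta w \ge 2w$ in $\Omega$.

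Modica's inequality gives the pointwise bound $w \le (1-u^2)^2/2 \le 1/18$ throughout $\overline{\Omega}$, and Remark~\ref{remexp} (applied with $a=0$) yields $w(x',x_3) \le 8\,e^{2\sqrt 2\,x_3} \to 0$ uniformly in $x'$ as $x_3\to -\infty$. A standard Phragm\'en--Lindel\"of-type maximum principle for $-\Delta+2$ on $\Omega$, applied to $w$ minus the Poisson extension of its bounded trace on $\{x_3=-A\}$, then produces the pointwise comparison
\[w(x',x_3) \;\le\; \int_{\R^2} P\bigl(x'-y',\,-A-x_3\bigr)\,w(y',-A)\,dy', \qquad (x',x_3)\in\Omega,\]
where $P(z',t)$, $z'\in\R^2$, $t>0$, denotes the Poisson kernel of $-\Delta+2$ on the half-space. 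Its Fourier symbol in $z'$ is $\hat{P}(\xi,t)=e^{-\sqrt{|\xi|^2+2}\,t}$, so $\|P(\cdot,t)\|_{L^1(\R^2)} = \hat{P}(0,t) = e^{-\sqrt 2\,t}\le 1$; and writing $P=-2\partial_t G$ in terms of the three-dimensional Yukawa Green's function $G(y)=e^{-\sqrt 2\,|y|}/(4\pi|y|)$ of $-\Delta+2$ yields the tail estimate
\[\int_{\{|z'|\ge R\}} P(z',t)\,dz' \;\le\; C_0\,e^{-\sqrt 2\,R} \qquad \text{for all } R,\,t\ge 0.\]

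Integrating the pointwise comparison over $\{|x'|<s\}$, applying Fubini, and splitting the $y'$-integration at $|y'|=s+\beta\ln s$, I proceed as follows. On the inner region $|y'|<s+\beta\ln s$, bounding the $x'$-integral by $\|P(\cdot,-A-x_3)\|_{L^1}\le 1$ produces a contribution at most $K(s+\beta\ln s,-A)$. On the outer region $|y'|\ge s+\beta\ln s$, I swap the order of integration: for each $|x'|<s$ one has $|x'-y'|\ge \beta\ln s$, so the tail bound above gives $\int_{|y'|\ge s+\beta\ln s} P(x'-y',-A-x_3)\,dy' \le C_0\, s^{-\sqrt 2\,\beta}$; multiplying by $\|w(\cdot,-A)\|_\infty \le 1/18$ and by the disk area $\pi s^2$ yields a contribution at most $C_1\, s^{2-\sqrt 2\,\beta}$. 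Choosing $\beta \ge \sqrt 2$ makes this uniformly bounded for $s\ge 1$, and combining gives $K(s,x_3) \le K(s+\beta\ln s,-A) + C$, as required.

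The main obstacle is the Phragm\'en--Lindel\"of comparison in the second paragraph: since $w$ has no a~priori decay as $|x'|\to\infty$ at fixed $x_3$, one must set up the half-space maximum principle with care, using the $L^\infty$ bound on $w$, its uniform decay as $x_3\to-\infty$, and crucially the positive mass term $+2w$ coming from the lower bound $3u^2-1\ge 1$ for $x_3\le -A$, which is precisely what produces the exponential decay of the kernel $P$ that the tail estimate relies on. Once the comparison is in place, the Fubini/splitting argument is elementary and the power $\beta\ln s$ appears naturally as the radius at which the kernel tail $s^{-\sqrt 2\,\beta}$ beats the area $\pi s^2$.
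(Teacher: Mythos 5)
Your proposal is correct and takes essentially the same route as the paper: a comparison in the half-space $\{x_3<-A\}$ with the bounded solution of a screened Poisson equation (using $3u^2-1\ge 1$ there), a kernel representation of that solution, and a splitting of the boundary data at radius $s+\beta\ln s$, with the near part controlled by the kernel having mass at most $1$ and the far part by the kernel's exponential decay beating the area $\pi s^2$. The only differences are implementation details: you work with $w=u_{x_1}^2+u_{x_2}^2$ directly (so the near-field estimate is linear in $w$ and needs no Cauchy--Schwarz, unlike the paper's treatment of $|u_{x_i}|$ and $\phi^2$), and you bound the far field via an explicit tail estimate for the Yukawa Poisson kernel, yielding the concrete choice $\beta\ge\sqrt2$, where the paper instead uses the supersolution $e^{\gamma\sqrt{x_1^2+x_2^2+1}}$ with a small $\gamma$ and $\beta=2/\gamma$; both versions are sound.
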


\begin{proof}
The functions $u_{x_1}$ and $u_{x_2}$ satisfy
\[
-\Delta u_{x_i}+(3u^{2}-1)\,u_{x_i}=0
\]
in $\R^3$. Thanks to~\eqref{uH2}, the function $u^{2}$ converges to $1$ as $x_3\to-\infty$, hence the operator $-\Delta+(3u^2-1)$ tends to $-\Delta+2.$ As a matter of fact, in $\R^2\times(-\infty,-A]$, one has
\[
-1<u\le H(-A)=\tanh \left(- \frac{A}{\sqrt{2}}\right)\le-\sqrt{\frac23}\,,
\]
thus $3u^2-1\ge1$ in $\R^2\times(-\infty,-A]$.

We are then going to compare $u_{x_1}$ (and later $u_{x_2}$) in $\R^2\times(-\infty,-A]$ to the bounded solution $\phi$ of the model problem
\[
\left\{
\begin{array}[c]{rcll}%
-\Delta\phi+\phi & = & 0, & (x_1,x_2,x_3)\in\R^2\times(-\infty,-A),\vspace{3pt}\\
\phi(x_1,x_2,-A)  & = & \left\vert u_{x_1}(x_1,x_2,-A)\right\vert, & (x_1,x_2)\in\R^2.
\end{array}
\right.
\]
By using the Fourier transform in the $(x_1,x_2)$ variables, the bounded solution $\phi$ of the above problem is given in $\R^2\times(-\infty,-A)$ by
\[
\phi(x_1,x_2,x_3)  =\int_{\mathbb{R}^{2}}\phi(x'_1,x'_2,-A)\,
G(x_1-x'_1,x_2-x'_2,x_3+A)\,dx'_1\,dx'_2,
\]
where, for $X_3<0$, $G(\cdot,\cdot,X_3)$ is the inverse Fourier transform of the function
\[
(\xi_1,\xi_2)\mapsto e^{\sqrt{\xi_1^2+\xi_2^2+1}\,X_3}\,,
\]
that is,
$$G(X_1,X_2,X_3)=\frac{1}{4\pi^2}\int_{\R^2}e^{i(X_1\xi_1+X_2\xi_2)+\sqrt{\xi_1^2+\xi_2^2+1}\,X_3}d\xi_1\,d\xi_2$$
for $(X_1,X_2,X_3)\in\R^2\times(-\infty,0)$. Note that we are interested in estimating $\phi(\cdot,\cdot,x_3)$ in the disks
\[
D_{s}:=\left\{ (x_1,x_2)\in\R^2  :x_1^{2}+x_2^{2}<s^2\right\},
\]
with $s\ge1$ and $x_3<-A$. To do so, for $s\ge1$ and $(x_1,x_2,x_3)\in\R^2\times(-\infty,-A)$, we divide~$\phi$ into two parts:
$$\begin{array}{rcl}
\phi(x_1,x_2,x_3) & = & \displaystyle\underbrace{\int_{\{|(x'_1,x'_2)|\ge s+\beta\ln s\}}\phi(x'_1,x'_2,-A)\,
G(x_1-x'_1,x_2-x'_2,x_3+A)\,dx'_1\,dx'_2}_{=:\phi_{1,s}(x_1,x_2,x_3)}\vspace{3pt}\\
& & \displaystyle+\underbrace{\int_{\{|(x'_1,x'_2)|<s+\beta\ln s\}}\phi(x'_1,x'_2,-A)\,
G(x_1-x'_1,x_2-x'_2,x_3+A)\,dx'_1\,dx'_2}_{=:\phi_{2,s}(x_1,x_2,x_3)},\end{array}$$
where $\beta>0$ will be chosen later.

On the one hand, since
\[
|\nabla u|\le \sqrt{2F(u)}\le\frac{1}{\sqrt{2}}
\]
in $\R^3$ from Modica's inequality~\cite{Modica}, and since $G\ge0$ in $\R^2\times(-\infty,0)$ and~$\phi\ge0$ in $\R^2\times(-\infty,-A)$ from the maximum principle, it follows that, for all $s\ge1$, $(x_1,x_2)\in D_s$ and $x_3<-A$,
$$\begin{array}{rcl}
0\le\phi_{1,s}(x_1,x_2,x_3) & \!\!\le\!\! & \displaystyle \frac{1}{\sqrt{2}}\int_{\{|(x'_1,x'_2)|\ge s+\beta\ln s\}}G(x_1-x'_1,x_2-x'_2,x_3+A)\,dx'_1\,dx'_2\vspace{3pt}\\
& \!\!\le\!\! & \displaystyle\underbrace{\frac{1}{\sqrt{2}}\int_{\{|(x'_1,x'_2)|\ge\beta\ln s\}}G(-x'_1,-x'_2,x_3+A)\,dx'_1\,dx'_2}_{=:\overline{\phi}_{s}(0,0,x_3)},\end{array}$$
where $\overline{\phi}_s$ denotes the bounded solution of
\begin{equation}\label{overphi}\left\{\begin{array}[c]{rcll}
-\Delta\overline{\phi}_s+\overline{\phi}_s & = & 0,\ \ (x_1,x_2,x_3)\in\R^2\times(-\infty,-A),\vspace{3pt}\\
\overline{\phi}_s(x_1,x_2,-A)  & = & \left\{\begin{array}{ll}\displaystyle\frac{1}{\sqrt{2}} & \hbox{if }|(x_1,x_2)|\ge\beta\ln s,\vspace{3pt}\\
0 & \hbox{otherwise}.\end{array}\right.\end{array}\right.
\end{equation}
It is immediate to check that there is a real number $\gamma>0$ small enough so that the function
\[
(x_1,x_2,x_3)\mapsto e^{\gamma\sqrt{x_1^2+x_2^2+1}}
\]
satisfies
\[
-\Delta\overline{\phi}+\overline{\phi}\ge0
\]
in $\R^3$. Hence, the function
\[
(x_1,x_2,x_3)\mapsto \frac{1}{\sqrt{2}}\,e^{\gamma\sqrt{x_1^2+x_2^2+1}-\gamma\sqrt{(\beta\ln s)^2+1}}
\]
is a supersolution of~\eqref{overphi}, and
$$0\le\overline{\phi}_s(x_1,x_2,x_3)\le\frac{e^{\gamma\sqrt{x_1^2+x_2^2+1}-\gamma\sqrt{(\beta\ln s)^2+1}}}{\sqrt{2}}\ \hbox{ for all $(x_1,x_2,x_3)\in\R^2\times(-\infty,-A)$}$$
from the maximum principle. As a consequence,
$$0\le\phi_{1,s}(x_1,x_2,x_3)\le\overline{\phi}_s(0,0,x_3)\le\frac{e^{\gamma-\gamma\beta\ln s}}{\sqrt{2}}$$
for all $s\ge1$, $(x_1,x_2)\in D_s$ and $x_3<-A$. Therefore, by choosing
$$\beta=\frac{2}{\gamma}>0,$$
one gets that
\begin{equation}\label{ineqphi1}
\int_{D_s}\phi_{1,s}^2(x_1,x_2,x_3)\,dx_1\,dx_2\le\frac{\pi\,e^{2\gamma}}{2\,s^2}\le\frac{\pi\,e^{2\gamma}}{2}\ \hbox{ for all $s\ge1$ and $x_3<-A$}.
\end{equation}

On the other hand, remember that $G\ge0$ in $\R^2\times(-\infty,0)$, and notice that $\|G(\cdot,\cdot,X_3)\|_{L^1(\R^2)}=e^{X_3}\le1$ for all $X_3<0$. Hence, using the Cauchy-Schwarz inequality and Fubini's theorem, we have, for all $s\ge1$ and $x_3<-A$,
$$\begin{array}{l}
\displaystyle\int_{D_s}\phi_{2,s}^{2}(x_1,x_2,x_3)\,dx_1\,dx_2\vspace{3pt}\\
\displaystyle\ =\int_{D_s}\left( \int_{\{|(x'_1,x'_2)|<s+\beta\ln s\}}\phi(x'_1,x'_2,-A)\,
G(x_1-x'_1,x_2-x'_2,x_3+A)\,dx'_1\,dx'_2\right)^{2}\,dx_1\,dx_2\\
\displaystyle\ \leq\int_{D_s}\left\{  \int_{\{|(x'_1,x'_2)|<s+\beta\ln s\}}\phi^{2}(x'_1,x'_2,-A)\,G(x_1-x'_1,x_2-x'_2,x_3+A)\,dx'_1\,dx'_2\right.\\
\displaystyle\qquad\qquad\qquad\qquad\quad\ \left.\times\int_{\{|(x'_1,x'_2)|<s+\beta\ln s\}}G(x_1-x'_1,x_2-x'_2,x_3+A)\,dx'_1\,dx'_2\right\}  dx_1\,dx_2\\
\displaystyle\ \leq\int_{\{|(x'_1,x'_2)|<s+\beta\ln s\}}\phi^{2}(x'_1,x'_2,-A)\,dx'_1\,dx'_2=\int_{D_{s+\beta\ln s}}\phi^{2}(x_1,x_2,-A)\,dx_1\,dx_2.
\end{array}$$

Therefore, together with~\eqref{ineqphi1}, it follows that, for all $s\ge1$ and $x_3<-A$,
$$\int_{D_s}\phi^2(x_1,x_2,x_3)\,dx_1\,dx_2\le\pi\,e^{2\gamma}+2\int_{D_{s+\beta\ln s}}\phi^{2}(x_1,x_2,-A)\,dx_1\,dx_2.$$
Lastly, since $3u^2-1\ge1$ in $\R^2\times(-\infty,-A]$, the maximum principle implies that $|u_{x_1}|\le\phi$ in $\R^2\times(-\infty,-A]$, hence
$$\begin{array}{rcl}
\displaystyle\int_{D_s}u_{x_1}^2(x_1,x_2,x_3)\,dx_1\,dx_2 & \le & \displaystyle\pi\,e^{2\gamma}+2\int_{D_{s+\beta\ln s}}\phi^{2}(x_1,x_2,-A)\,dx_1\,dx_2\vspace{3pt}\\
& = & \displaystyle\pi\,e^{2\gamma}+2\int_{D_{s+\beta\ln s}}u_{x_1}^{2}(x_1,x_2,-A)\,dx_1\,dx_2\end{array}$$
for all $s\ge1$ and $x_3<-A$. The same property holds similarly for the function $u_{x_2}$. Thus,
$$\begin{array}{l}
\displaystyle\int_{D_s}\big(u_{x_1}^2(x_1,x_2,x_3)+u_{x_2}^2(x_1,x_2,x_3)\big)\,dx_1\,dx_2\vspace{3pt}\\
\qquad\qquad\qquad\displaystyle\le 2\,\pi\,e^{2\gamma}+2\int_{D_{s+\beta\ln s}}\big(u_{x_1}^{2}(x_1,x_2,-A)+u_{x_2}^{2}(x_1,x_2,-A)\big)\,dx_1\,dx_2\end{array}$$
for all $s\ge1$ and $x_3<-A$, that is,
\[
K(s,x_3)\le2\pi e^{2\gamma}+2\,K(s+\beta\ln s,-A)\,,
\]
with $K$ defined in~\eqref{defK}. Notice that this last inequality also holds trivially with $x_3=-A$ since $s+\beta\ln s\ge s$. The proof of Lemma~\ref{K} is thereby complete with $C=2\pi e^{2\gamma}>0$.
\end{proof}

With a slight abuse of notation, we also write $u$ in the polar coordinates (in the $(x_1,x_2)$-plane) as $u(r,\theta,x_3)$. Let us now define, for $s>0$,
\begin{equation}\label{deff}
f\left(  s\right)  :=\int_{0}^{s}\int_{-\infty}^{-A}\int_{0}^{2\pi}u_{r}(r,\theta,x_3)\,%
u_{x_3}(r,\theta,x_3)\,d\theta\,dx_3\,dr.
\end{equation}
Note that the above integral converges absolutely for each $s>0$, from Remark~\ref{remexp}, and that $f(s)\to0$ as $s\to0$.

\begin{lemma}
\label{flimit}
The quantity $\alpha$ defined in~\eqref{defalpha} is such that $\alpha=0$.
\end{lemma}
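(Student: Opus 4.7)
Suppose by contradiction that $\alpha > 0$. The plan is to combine a lower bound forcing $f(s) \to +\infty$ with a contradictory upper bound showing that $f$ stays bounded in $s$.

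First, I would convert the Pohozaev formula~(\ref{integrals}) for $g(s)$ to polar coordinates (using $u_{x_1}\cos\theta + u_{x_2}\sin\theta = u_r$) and compare with the definition~(\ref{deff}) of $f$; this immediately gives the identity $f'(s) = g(s)/s$, hence
$$f(s) = \int_0^s \frac{g(r)}{r}\, dr.$$
Since $g$ is nonnegative and nondecreasing with limit $\alpha > 0$, we have $g(r) \ge \alpha/2$ for all $r \ge r_0$, and therefore $f(s) \ge f(r_0) + (\alpha/2)\log(s/r_0) \to +\infty$ as $s \to +\infty$.

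For the contradictory upper bound, the two quantitative tools at our disposal are Lemma~\ref{K}, which combined with $K(s,-A) \le 2g(s) \le 2\alpha$ from~(\ref{gK}) yields the uniform estimate $K(s,x_3) \le C$ for all $s \ge 1$ and $x_3 \le -A$; and the exponential decay $|\nabla u(x)| \le 2\sqrt{2}\, e^{\sqrt{2}\, x_3}$ from Remark~\ref{remexp} (in the setting of Section~\ref{n=3}, where $a=0$). Rewriting
$$f(s) = \int_{D_s \times (-\infty,-A)} \frac{u_r\, u_{x_3}}{r}\, dx_1 dx_2 dx_3$$
in Cartesian coordinates, isolating the (uniformly bounded) contribution on $\{r \le 1\}$ and applying Cauchy--Schwarz on the annulus $\{1 \le r \le s\}$, one obtains
$$\Big|\int \frac{u_r u_{x_3}}{r}\,dx\Big|^2 \le \Big(\int u_r^2\,dx\Big) \Big(\int \frac{u_{x_3}^2}{r^2}\,dx\Big).$$
The first factor is bounded by $\int_{-\infty}^{-A} K(s,x_3)\,dx_3$, which is controlled by interpolating the uniform Lemma~\ref{K} bound on a thin slab $[-A-B,-A]$ with the exponential decay of $|\nabla u|^2$ for $x_3 < -A-B$ and choosing $B = \Theta(\log s)$; the second factor reduces to a constant times $\int_{\{1 \le r \le s\}} r^{-2}\,dx_1 dx_2 = 2\pi \log s$ via exponential decay of $u_{x_3}^2$ in $x_3$.

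The principal obstacle is that this direct implementation yields only $|f(s)| = O(\log s)$, of the same order as the lower bound $(\alpha/2)\log s$, and therefore not immediately contradictory. To sharpen the upper bound to $|f(s)| = o(\log s)$---ideally $|f(s)| = O(1)$---I would invoke the $C^2_{\text{loc}}$ convergence $u(\cdot + y_k) \to H(x_3)$ recalled at the beginning of Section~\ref{n=3}: it forces the transverse gradient $(u_{x_1}, u_{x_2})$ to be arbitrarily small on balls of any fixed radius around the points $y_k$, so the transverse energy $K(s,x_3)$ cannot be uniformly spread over large disks. A refined iteration of Lemma~\ref{K} exploiting this concentration should reduce the first Cauchy--Schwarz factor strictly below $(\alpha/2)^2\log s$ and so produce the desired contradiction, forcing $\alpha = 0$. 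Quantifying this improvement is the main technical difficulty.
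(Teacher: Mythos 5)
Your setup is right---the identity $f'(s)=g(s)/s$, the lower bound $f(s)\gtrsim \ln s$ when $\alpha>0$, and the Cauchy--Schwarz estimate of $f(s)$ using the weights $r$ and $1/r$, Remark~\ref{remexp} and Lemma~\ref{K} are exactly the ingredients of the paper's argument. But the proof as proposed has a genuine gap, which you yourself flag: your upper bound is only $O(\ln s)$, of the same order as the lower bound, and the ``refined iteration'' you suggest to improve it to $o(\ln s)$ is not carried out and is unlikely to work as stated. The $C^2_{loc}$ convergence $u(\cdot+y_k)\to H(x_3)$ holds only along one sequence of points $y_k\to\infty$, so it makes $(u_{x_1},u_{x_2})$ small only on fixed-size neighbourhoods of those points; this says nothing about how the transverse energy is distributed over the whole disk $D_s$, and gives no quantitative gain in $K(s,x_3)$. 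A second, smaller issue: your uniform bound ``$K(s,x_3)\le C$'' uses $g(s)\le\alpha<\infty$, but by~\eqref{defalpha} the contradiction hypothesis only gives $\alpha\in(0,+\infty]$, so the case $\alpha=+\infty$ is not covered by that step.

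The missing idea in the paper's proof is precisely \emph{not} to bound $g(s+\beta\ln s)$ by a constant, but to keep it as $(s+\beta\ln s)\,f'(s+\beta\ln s)$. Combining \eqref{gK} and Lemma~\ref{K} inside the Cauchy--Schwarz estimate gives
$$f(s)\ \le\ \sqrt{8\pi C\,(\ln s)\,\bigl(1+2\,f'(s+\beta\ln s)\,(s+\beta\ln s)\bigr)}\ +\ 6\pi ,$$
and since $f(s+\beta\ln s)-f(s)=\int_s^{s+\beta\ln s}g(\tau)\,d\tau/\tau\le 6\pi\beta\ln s\lesssim f(s)$ by the lower bound, one obtains, for $t=s+\beta\ln s$ large, the Riccati-type inequality
$$\frac{f'(t)}{f(t)^2}\ \ge\ \frac{1}{C_1^2\,t\,\ln t}.$$
Integrating, $t\mapsto 1/f(t)+C_1^{-2}\ln(\ln t)$ is nonincreasing, which contradicts $f>0$ because $\int dt/(t\ln t)$ diverges. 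This self-improving (differential-inequality) step is what turns two bounds of the same logarithmic order into a contradiction, and it works uniformly for $\alpha\in(0,+\infty]$; without it, or a genuine replacement, your argument does not close.
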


\begin{proof}
Let us assume by way of contradiction that $\alpha>0$. Observe first that the function~$f$ is of class $C^1$ in $(0,+\infty)$ and that, thanks to~\eqref{integrals},
$$f'(s)=\frac{g(s)}{s}\ \hbox{ for all $s>0$}.$$
Using the fact that $g$ is nonnegative and $g(\tau)\to\alpha\in(0,+\infty]$ as $\tau\to+\infty$, we deduce that
\begin{equation}\label{fs}
f(s)=\int_0^sf^{\prime}(\tau)\,d\tau=\int_{0}^s\frac{g(\tau)}{\tau}\,d\tau
\geq\alpha'\ln s\ \hbox{ for all $s$ large enough},
\end{equation}
with, say, $\alpha'=\alpha/2>0$ if $0<\alpha<+\infty$ and $\alpha'=1$ if $\alpha=+\infty$. On the other hand, one infers Remark~\ref{remexp}, with here $a=0$ thanks to~\eqref{uH2}, that
\begin{equation}\label{nablau}
|\nabla u|\le 2\,\sqrt{2}\,e^{\sqrt{2}\,x_3}\ \hbox{ in }\R^3
\end{equation}
and from~\eqref{integrals} that
\begin{equation}\label{gtau}
\frac{g(\tau)}{\tau}=\int_{-\infty}^{-A}\int%
_{0}^{2\pi}u_{r}(\tau,\theta,x_3)\,u_{x_3}(\tau,\theta,x_3)\,d\theta\,dx_3\leq4\,\sqrt{2}\,\pi\le 6\,\pi
\end{equation}
for all $\tau>0$, hence
\begin{equation}\label{fs2}
f(s+\beta\ln s)-f(s)=\int_{s}^{s+\beta\ln s}\frac{g(\tau)}{\tau}\,d\tau\le6\,\pi\,\beta\,\ln s\ \hbox{ for all }s\ge1.
\end{equation}

Using again~\eqref{nablau} and~\eqref{gtau}, together with the definition~\eqref{defK} of $K$ and the decomposition of the integral~\eqref{deff} with respect to $r\in[0,s]$ into two integrals over $[1,s]$ and $[0,1]$, we get that, for all $s\ge1$,
\begin{align*}
f(s)  &  \leq \int_{-\infty}^{-A}\left[  \int_{1}^{s}\int%
_{0}^{2\pi}r\,u_{r}^{2}\,d\theta\,dr\right]  ^{1/2}\left[  \int_{1}^{s}\int%
_{0}^{2\pi}\frac{u_{x_3}^{2}}{r}\,d\theta\,dr\right]  ^{1/2}dx_3+6\,\pi\\
&  \le\sqrt{16\,\pi\ln s}\int_{-\infty}^{-A}\sqrt{K(s,x_3)}\,e^{\sqrt{2}\,x_3}\,dx_3+6\,\pi.
\end{align*}
Applying inequality~\eqref{gK} and Lemma~\ref{K}, we deduce that, for all $s\ge1$,
$$\begin{array}{rcl}
f(s)   &  \leq & \displaystyle\sqrt{16\,\pi\ln s}\int_{-\infty}^{-A}\sqrt{C(1+K(s+\beta\ln s,-A))}\,e^{\sqrt{2}\,x_3}\,dx_3+6\,\pi\vspace{3pt}\\
& \le & \sqrt{8\,\pi\,C\,(\ln s)\,\big(1+2g(s+\beta\ln s)\big)}+6\,\pi\vspace{3pt}\\
& = & \sqrt{8\,\pi\,C\,(\ln s)\,\big(1+2f'(s+\beta\ln s)\,(s+\beta\ln s)\big)}+6\,\pi.\end{array}$$
Together with~\eqref{fs}, it follows that $f'(t)\,t\to+\infty$ as $t\to+\infty$, and that
$$0<f(s)\le\sqrt{17\,\pi\,C\,(s+\beta\ln s)\,\ln(s+\beta\ln s)\,f'(s+\beta\ln s)}\ \hbox{ for all $s$ large enough}.$$
Thanks to~\eqref{fs} and~\eqref{fs2}, we then infer that, for all $s$ large enough,
$$\begin{array}{rcl}
0<f(s+\beta\ln s)\leq f(s)+6\,\pi\,\beta\,\ln s & \!\!\le\!\! & \displaystyle\Big(1+\frac{6\,\pi\,\beta}{\alpha'}\Big)\,f(s)\vspace{3pt}\\
& \!\!\le\!\! & C_1\,\sqrt{(s+\beta\ln s)\,\ln(s+\beta\ln s)\,f'(s+\beta\ln s)}
\end{array}$$
with
\[
C_1=\sqrt{17\,\pi\,C}\,\left(1+ \frac{6\,\pi\,\beta}{\alpha'}\right)>0\,.
\]
In other words, there is $t_{0}>0$ such that $f>0$ on~$[t_0,+\infty)$ and%
\[
\frac{f^{\prime}(t)}{f(t)^{2}}\geq\frac{1}{C_1^2\,t\ln
t}\text{ for all }t\geq t_{0}.\text{ }%
\]
It follows that the function
\[
t\mapsto\frac{1}{f(t)}+\frac{\ln(\ln t)}{C_1^2}
\]
is nonincreasing on $[t_0,+\infty)$. But since $f>0$ on $[t_0,+\infty)$, one has
\[
\frac{1}{f(t)}+ \frac{\ln(\ln t)}{C_1^2}\to+\infty
\]
as $t\to+\infty$. This is a contradiction. Therefore, $\alpha=0$ and the proof of Lemma~\ref{flimit} is thereby complete.
\end{proof}

\begin{proof}[End of the proof of Theorem~$\ref{main}$ for $n=3$]
As in the case $n=2$, the fact that $\alpha=0$ in~\eqref{integrals} and~\eqref{defalpha}, together with~\eqref{I}, implies that
$$\begin{array}{rcl}
\displaystyle\frac{|\nabla u(x_1,x_2,-A)|^2}{2}+F(u(x_1,x_2,-A))-u_{x_3}^2(x_1,x_2,-A) & & \vspace{3pt}\\
\qquad\qquad\qquad\displaystyle=\ \frac{u_{x_1}^2(x_1,x_2,-A)+u_{x_2}^2(x_1,x_2,-A)}{2} & = & 0\end{array}$$
for all $(x_1,x_2)\in\R^2$. Hence
\[
F(u(x_1,x_2,-A))=\frac{u_{x_3}^2(x_1,x_2,-A)}{2}=\frac{|\nabla u(x_1,x_2,-A)|^2}{2}
\]
for all $(x_1,x_2)\in\R^2$ and one concludes from~\cite{CGS,Modica} that $u$ is one-dimensional, namely $u(x_1,x_2,x_3)\equiv H(x_3)$ in $\R^3$ from the second paragraph of this subsection.
\end{proof}

\begin{remark}{\rm In higher dimensions $n\ge4$, one could still apply the balancing condition and define some functions $g$ and $f$ with formulas similar to~\eqref{integrals} and~\eqref{deff} above. However, in~\eqref{integrals}, there would be a factor $s^{n-2}$ instead of $s$ in the right-hand side. Even if Lemma~$\ref{K}$ still extends to that case (with a different value for the constant $\beta$), Lemma~$\ref{flimit}$ does not extend as such. In particular, one would have $f'(s)=g(s)/s^{n-2}$, and the integrability of the function $1/s^{n-2}$ at infinity does not imply that $f(+\infty)=+\infty$ if~$\alpha:=g(+\infty)>0$, and then the end of the proof does not work.}
\end{remark}


\section{Half-space theorems for free boundary problems}\label{free}

In this section, we are interested in half-space properties for free boundary
problems. First of all, we consider the following classical one phase free
boundary problem:%
\begin{equation}
\left\{
\begin{array}
[c]{rcl}%
\Delta u & = & 0\ \text{ in }\Xi:=\left\{  u>0\right\}  \subset\mathbb{R}^{n},\vspace{3pt}\\
\left\vert \nabla u\right\vert & = & 1\ \text{ on }\partial\Xi,
\end{array}
\right.  \label{one-phase}%
\end{equation}
where $u$ is understood in the classical sense in $\overline{\Xi}$ and $\partial\Xi$ is globally smooth.

The existence of catenoid type solutions of this problem has been proved using an
Allen-Cahn approximation. We refer to~\cite{LWW2} and the references therein
for more discussion on this problem.

We have the following half-space property:

\begin{theorem}
\label{one}Let $n\leq3$ and $u$ be a solution of~\eqref{one-phase}
with $\left\vert \nabla u\right\vert \leq1.$ Suppose that the
positive phase $\Xi$ is contained in the half-space $\left\{  x_{n}>0\right\}
$. Then $u$ is one-dimensional, namely there is $h\ge0$ such that $\Xi=\{x_n>h\}$ and $u$ is the one-dimensional function $u(x)\equiv x_n-h$ in~$\overline{\Xi}$.
\end{theorem}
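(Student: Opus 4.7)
The plan is to adapt the Allen--Cahn argument of Sections~\ref{sec2} and~\ref{sec3} to the one-phase setting, using the free-boundary one-dimensional profile $V^\omega(x):=(x_n+\omega)_+$ in place of $H(x_n+\omega)$, the Alt--Caffarelli energy density $\tfrac12|\nabla u|^2+\chi_{\{u>0\}}$ in place of $\tfrac12|\nabla u|^2+F(u)$, and the assumed bound $|\nabla u|\le 1$ in $\Xi$ (which reads as $|\nabla u|^2\le 2\,\chi_{\{u>0\}}$) in place of Modica's inequality.

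First I would prove the free-boundary analogue of Theorem~\ref{main2}. Since $u=0$ on $\partial\Xi\subset\{x_n\ge 0\}$ and $|\nabla u|\le 1$ in $\Xi$, one has $u(x)\le\mathrm{dist}(x,\partial\Xi)\le x_n$ for every $x\in\Xi$, hence $u\le V^0$ in $\R^n$. The quantity $a:=\inf\{\omega\in\R:u\le V^\omega\text{ in }\R^n\}$ then lies in $(-\infty,0]$ (finite because $\Xi\neq\emptyset$), and $u\le V^a$ by continuity. If the inequality is touched at some point, a strong maximum principle / boundary Hopf argument combined with the free-boundary condition $|\nabla u|=1$ forces $u\equiv V^a$ and $\Xi=\{x_n>-a\}$, proving the theorem with $h=-a$. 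Otherwise one produces, by compactness, a sequence $y_k\in\R^{n-1}\times\{0\}$ with $|y_k|\to+\infty$ such that $u(\cdot+y_k)\to V^a$ locally, in a topology strong enough to pass to the limit in the boundary integrals below (using the global Lipschitz bound together with standard one-phase free-boundary regularity).

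The rigidity in dimensions $n=2,3$ would then use the balancing identity
\begin{equation*}
\int_{\partial\Omega}\left[\Bigl(\tfrac12|\nabla u|^2+\chi_{\{u>0\}}\Bigr)X\cdot\nu - (X\cdot\nabla u)(\nabla u\cdot\nu)\right]d\sigma = 0,
\end{equation*}
which is valid on any bounded Lipschitz domain $\Omega$ since the divergence of the integrand vanishes distributionally on $\R^n$ thanks to harmonicity of $u$ in $\Xi$ and the jump relation $|\nabla u|=1$ across the smooth $\partial\Xi$. Taking $X=e_n$ on vertical strips $(t_k^-,t_k^+)\times(-\infty,\alpha)$ for $n=2$, or on half-cylinders $\{x_1^2+x_2^2<s^2,\,x_3<\alpha\}$ for $n=3$, with $\alpha$ chosen as the $\liminf$ of the free-boundary height (in the spirit of Section~\ref{sec3}), the vertical side integrals tend to zero: on the portion below the free boundary, $u$ is identically zero; on the remaining bounded portion, $u(\cdot+y_k)\to V^a$, whose tangential derivatives vanish. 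The horizontal slice contribution then forces $\int_{\R^{n-1}}\bigl(\tfrac12|\nabla u|^2+\chi_{\{u>0\}}-u_{x_n}^2\bigr)(\cdot,\alpha)\,dx'=0$ in the limit. The Modica-type inequality gives the pointwise bound $\tfrac12|\nabla u|^2+\chi_{\{u>0\}}-u_{x_n}^2\ge\tfrac12|\nabla'u|^2\ge 0$, where $\nabla'=(\partial_{x_1},\ldots,\partial_{x_{n-1}})$, so $\nabla'u\equiv 0$ on the slice; harmonic unique continuation in $\Xi$, combined with the free-boundary condition and $u\ge 0$, then yields $u(x)=(x_n-h)_+$ with $h\ge 0$.

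The main obstacle I expect is the $n=3$ rigidity step, which in the Allen--Cahn proof relied on the delicate logarithmic iteration of Lemma~\ref{flimit} together with the exponential decay estimate of Remark~\ref{remexp}. In the free-boundary setting, the exponential decay is replaced by the identical vanishing of $\nabla u$ on $\{x_n\le h\}\subset\R^n\setminus\overline\Xi$, which is stronger locally but confined to a half-space that may fail to dominate the region of integration; accordingly, the analogue of Lemma~\ref{K} must be re-derived from the uniform Lipschitz bound $|\nabla u|\le 1$ together with Caccioppoli-type interior estimates for harmonic functions in $\Xi$, and the analogue of Lemma~\ref{flimit} should go through with parameters adjusted to the fact that $V^a$ grows only linearly, rather than saturating exponentially fast, above the free boundary.
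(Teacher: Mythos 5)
Your overall strategy (a Pohozaev/balancing identity on half-cylinders, with the bound $|\nabla u|\le 1$ playing the role of Modica's inequality) is indeed the one the paper uses, but two key steps fail as written. First, the balancing identity is false with the density $\tfrac12|\nabla u|^2+\chi_{\{u>0\}}$. On $\partial\Xi$ one has $\nabla u=-\nu$ with $|\nabla u|=1$, so the inner flux of the stress-energy field is $\big(\tfrac12+c\big)\nu_n-\nu_n$ when the potential term is $c\,\chi_{\{u>0\}}$; the distributional divergence across the free boundary vanishes only for $c=\tfrac12$, which is precisely why the paper takes $F$ to be \emph{half} the characteristic function of $(0,+\infty)$. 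With your normalization a surface term $\tfrac12\int_{\partial\Xi\cap\Omega}\nu_n\,d\sigma$ is left over, and even formally the limiting relation $\int\big(\tfrac12|\nabla u|^2+\chi_{\{u>0\}}-u_{x_n}^2\big)=0$ on a slice would force the slice to avoid $\Xi$ (each point of $\Xi$ contributes at least $\tfrac12$), rather than yield $|\nabla u|=1$ and $\nabla'u=0$ there. Second, and more seriously, in dimension $3$ you cannot kill the lateral term of the half-cylinder by the local convergence $u(\cdot+y_k)\to V^a$: that convergence holds along a single sequence of horizontal translations, whereas the lateral boundary $\{x_1^2+x_2^2=s^2\}$ has measure growing like $s$ and is not controlled by such translates. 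The paper never attempts this; its proof of Theorem~\ref{one} runs the quantitative machinery of Section~\ref{n=3}: the slice integral $g(s)$ equals the lateral term, is nonnegative and nondecreasing, and if its limit were positive one derives $f'(s)\ge C f(s)^2/(s\ln s)$ (the analogues of Lemmas~\ref{K} and~\ref{flimit}) and a contradiction. You explicitly defer exactly this step (``the analogue of Lemma~\ref{K} must be re-derived\dots should go through with parameters adjusted''), so the hardest part of the argument is not supplied; note also that no analogue of Theorem~\ref{main2} is needed here, since $u\equiv 0$ on $\{x_n\le 0\}$ already replaces the exponential decay of Remark~\ref{remexp}, and your sliding dichotomy with $V^\omega=(x_n+\omega)_+$ is degenerate because $u$ and $V^a$ coincide (both vanish) on a large set.

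The endgame also needs repair. From $\nabla'u\equiv 0$ on a slice you cannot invoke ``harmonic unique continuation'': a harmonic function may vanish on a relatively open subset of a hyperplane (e.g.\ $x_n-\alpha$) without vanishing identically. The paper's route is different: with the correct density one gets, on the slice inside $\Xi$, both $\nabla'u=0$ and $u_{x_n}^2=1$, hence $|\nabla u|=1$ at interior points; since $|\nabla u|\le 1$ everywhere and $|\nabla u|^2$ is subharmonic in $\Xi$, the strong maximum principle forces $D^2u\equiv 0$, so $\nabla u$ is a constant vertical unit vector on every component of $\Xi$ meeting the slice. This is then combined with a normalization you omit: after shifting so that $\inf_\Xi x_n=0$, boundary elliptic estimates give $u_{x_n}>0$ in $\overline\Xi\cap\{x_n\le a\}$ for $a>0$ small, and only with this does one conclude $\Xi=\{x_n>h\}$ and $u\equiv x_n-h$.
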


\begin{proof}
The idea of proof is same as that of Section~\ref{n=3}. We sketch the proof
and list the necessary modifications. Let us only consider the case $n=3.$

Up to shift in the $x_3$-direction, one can assume without loss of generality that $\Xi$ is
not contained in $\left\{  x_3>a\right\}  $ for any $a>0.$ From standard elliptic estimates up to the boundary, one can fix $a>0$ small enough such that $u_{x_3}>0$ in $\overline{\Xi}\cap\{  x_3\le a\}$.

We still adopt the notation
of Section~\ref{n=3} and, for $s>0$, let $\Omega_s$ be the half-cylinder
\[
\Omega_s:=\left\{  \left(  x_1,x_2,x_3\right)\in\R^3  :r^2=x_1^{2}+x_2^{2}<s^2,\ \ x_3<a\right\}  .
\]
For $\varepsilon>0$, let us define
\[
\Xi_{\varepsilon}:=\left\{  Z\in\Xi : {\rm{dist}}\left(  Z,\partial\Xi\right)
>\varepsilon\right\}  .
\]
Let $F$ be half the characteristic function of the interval $\left(  0,+\infty\right)$,
that is, $F(\tau)=1/2$ if $\tau>0$ and $F(\tau)=0$ if $\tau\le0$.
Then we have the following balancing formula, with $X=(0,0,1)$ and $\varepsilon\in(0,a)$:%
\[
\int_{\partial\left(  \Omega_s\cap\Xi_{\varepsilon}\right)  }\left[  \left(
\frac{1}{2}\left\vert \nabla u\right\vert ^{2}+F\left(  u\right)  \right)
X\cdot\nu-\left(  \nabla u\cdot X\right)  \left(  \nabla u\cdot\nu\right)
\right]  d\sigma=0.
\]
Sending $\varepsilon$ to $0$ in this identity and using the free boundary
condition, we get
\[
\int_{\partial\Omega_s\cap\Xi}\left[  \left(  \frac{1}%
{2}\left\vert \nabla u\right\vert ^{2}+F\left(  u\right)  \right)  X\cdot
\nu-\left(  \nabla u\cdot X\right)  \left(  \nabla u\cdot\nu\right)  \right]
d\sigma=0.
\]
Now we extend the solution $u$ to $\mathbb{R}^{3}$ such that $u=0$ in
$\mathbb{R}^{3}\backslash\Xi.$ Still denote it as $u.$ Then we get
\[
\int_{\partial\Omega_s}\left[  \left(  \frac{1}{2}\left\vert \nabla u\right\vert
^{2}+F\left(  u\right)  \right)  X\cdot\nu-\left(  \nabla u\cdot X\right)
\left(  \nabla u\cdot\nu\right)  \right]  d\sigma=0.
\]
Note that $u$ is not smooth across the free boundary, but, for any $Y\in\partial\Xi$, the quantity $(|\nabla u(Z)|^2/2+F(u(Z)))\,X\cdot\nu(Y)-(\nabla u(Z)\cdot X)\,(\nabla u(Z)\cdot\nu(Y))$ converges to $0$ as $Z\to Y$ with $Z\in\Xi$ and it vanishes for all $Z\in\R^3\setminus\overline{\Xi}$.

With the same slight abuse of notation as in Section~\ref{n=3}, we define, for $s>0$,
\[
f\left(  s\right)  =\int_{0}^{s}\int_{-\infty}^{a}\int_{0}^{2\pi}u_{r}(r,\theta,x_3)%
u_{x_3}(r,\theta,x_3)\,d\theta\,dx_3\,dr.
\]
Since $|\nabla u|\le1$ in $\Xi$, one has $|\nabla u|^2/2+1/2\ge u_{x_3}^2$ in $\Xi$,
hence the function $f$ is nonnegative, non-decreasing and differentiable with
respect to $s.$ Similarly to the proof of Section~\ref{n=3}, we can show that, if
$$\lim_{s\to+\infty}\int_{\{x_1^2+x_2^2<s^2\}}\left(\frac12|\nabla u|^2+F(u)-u_{x_3}^2\right)\,dx_1\,dx_2>0,$$
then there is a positive constant $C>0$ such that
\[
f^{\prime}\left(  s\right)  \geq\frac{Cf^{2}\left(  s\right)  }{s\ln s},\text{
for }s\text{ large.}%
\]
The previous inequality yields a contradiction as in Section~\ref{n=3}. This then implies that
$\left\vert \nabla
u(x_1,x_2,a)\right\vert =1$ and $u_{x_1}(x_1,x_2,a)=u_{x_2}(x_1,x_2,a)=0$ for all $(x_1,x_2)\in\R^2$ such that $(x_1,x_2,a)\in\Xi$. Since $\Delta\left(  \left\vert \nabla u\right\vert
^{2}\right)=2\sum_{1\le i,j\le 3}u_{x_ix_j}^2\geq0$ in $\Xi$, we conclude that $|\nabla u|=1$ and $u_{x_1}=u_{x_2}=0$ in each connected component of $\Xi$ meeting $\{x_3=a\}$. It finally follows that $\Xi\supset\{x_3=a_k\}$, for a sequence $(a_k)_{k\in\N}$ with $a_k\to0^+$ (from the normalization made in the second paragraph of the proof) and, remembering that $u_{x_3}>0$ in $\overline{\Xi}\cap\{x_3\le a\}$, we easily conclude that $\Xi=\{x_3>0\}$ and $u$ is the one-dimensional function $u(x_1,x_2,x_3)\equiv x_3$.
\end{proof}

Similarly, we can consider the following double-well type free boundary
problem:%
\begin{equation}
\left\{
\begin{array}
[c]{rcl}%
\Delta u & = & 0\ \text{ in }\Xi:=\left\{  \left\vert u\right\vert <1\right\}
\subset\mathbb{R}^{n},\vspace{3pt}\\
\left\vert \nabla u\right\vert & = & 1\ \text{ on }\partial\Xi.
\end{array}
\right.  \label{two-phase}%
\end{equation}
The proof of the following result is essentially same as that of Theorem
\ref{one}, and we omit the details.

\begin{theorem}
\label{two}Let $n\leq3$ and $u$ be a solution of~\eqref{two-phase}
with $\left\vert \nabla u\right\vert \leq1.$ Suppose that $\left\{
\left\vert u\right\vert <1\right\}$ is contained in the half-space $\left\{
x_{n}>0\right\}  $. Then $u$ is one-dimensional, namely there is $h\ge1$ such that $\Xi=\{h-1<x_n<h+1\}$, and either $u(x)\equiv x_n-h$ in $\overline{\Xi}$ or $u(x)\equiv-(x_n-h)$ in $\overline{\Xi}$.
\end{theorem}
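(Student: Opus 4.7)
The strategy follows that of Theorem~\ref{one}, adapted to handle the two free-boundary components $\{u=+1\}$ and $\{u=-1\}$. Since $\Xi=\{|u|<1\}\subset\{x_n>0\}$, the connected half-space $\{x_n\le 0\}$ is the disjoint union of the closed sets $\{u=1\}\cap\{x_n\le 0\}$ and $\{u=-1\}\cap\{x_n\le 0\}$; by connectedness one of them is empty, and up to changing $u$ into $-u$ one may assume $u\equiv-1$ on $\{x_n\le 0\}$. A translation in the $x_n$-direction normalizes $\inf_{\overline\Xi}x_n=0$, and standard boundary regularity combined with the Hopf lemma provides a small $a>0$ such that $\partial\Xi\cap\{x_n\le a\}\subset\{u=-1\}$ and $u_{x_n}>0$ on $\overline\Xi\cap\{x_n\le a\}$. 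I will write the proof for $n=3$; the case $n=2$ is analogous.

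\textbf{Balancing formula.} Define $F(\tau)=\tfrac12$ if $|\tau|<1$ and $F(\tau)=0$ if $|\tau|\ge 1$; set $\Omega_s=\{x_1^2+x_2^2<s^2,\ x_3<a\}$ and $X=(0,0,1)$. Apply the Pohozaev identity on $\Omega_s\cap\Xi_\varepsilon$ and let $\varepsilon\to 0^+$. The boundary integrand on $\partial\Xi$ vanishes identically: from inside, $|\nabla u|^2/2+F(u)=1$, while the Hopf lemma together with $|\nabla u|=1$ forces $\nabla u=+\nu$ on $\{u=+1\}\cap\partial\Xi$ and $\nabla u=-\nu$ on $\{u=-1\}\cap\partial\Xi$, so in both cases
\[
\bigl(\tfrac12|\nabla u|^2+F(u)\bigr)X\cdot\nu-(\nabla u\cdot X)(\nabla u\cdot\nu)=X\cdot\nu-X\cdot\nu=0.
\]
Extending $u$ by $\pm 1$ outside $\Xi$, where $\nabla u=0$ and $F(u)=0$, one then recovers $\int_{\partial\Omega_s}[\,\cdot\,]\,d\sigma=0$ exactly as in Theorem~\ref{one}.

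\textbf{Monotonicity, differential inequality, and contradiction.} Define $g$ and $f$ by the analogues of~\eqref{integrals} and~\eqref{deff}. The hypothesis $|\nabla u|\le 1$ forces $u_{x_3}^2\le 1=2F(u)$ in $\Xi$, which yields the Modica-type inequality
\[
\tfrac12|\nabla u|^2+F(u)-u_{x_3}^2\ \ge\ \tfrac12\bigl(u_{x_1}^2+u_{x_2}^2\bigr)\ \ge\ 0\qquad\text{in }\R^3,
\]
so $g$ is nonnegative and nondecreasing and $\alpha:=\lim_{s\to+\infty}g(s)\in[0,+\infty]$ is well defined. Because $u\equiv-1$ for $x_3\le 0$, the integrands defining $f$ and $g$ vanish on $x_3\le 0$, so the $x_3$-integrations reduce to the bounded interval $[0,a]$, which removes the need for the exponential-decay argument of the Allen--Cahn proof. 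Assuming $\alpha>0$ and reproducing Lemma~\ref{flimit} with the harmonic-function analogue of Lemma~\ref{K}---bounding $K(s,x_3)$ for $x_3\in[0,a]$ by $C(1+K(s+\beta\ln s,a))$---one obtains $f'(s)\ge Cf(s)^2/(s\ln s)$ for $s$ large, and the integration argument at the end of Lemma~\ref{flimit} yields a contradiction; hence $\alpha=0$.

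\textbf{One-dimensional conclusion and main obstacle.} Since $g\equiv 0$ and the integrand is nonnegative, $u_{x_1}=u_{x_2}=0$ and $|\nabla u|=1$ on $\{x_3=a\}\cap\Xi$. The subharmonicity $\Delta(|\nabla u|^2)=2\sum u_{x_ix_j}^2\ge 0$ in $\Xi$ and the strong maximum principle then force $|\nabla u|\equiv 1$ on the connected component $C_0$ of $\Xi$ meeting $\{x_3=a\}$; consequently all second derivatives $u_{x_ix_j}$ vanish on $C_0$, so $u$ is affine there, and combining with $u_{x_n}>0$ and $\inf_{\overline\Xi}x_n=0$ gives $C_0=\{0<x_3<2\}$ and $u(x)\equiv x_3-1$ on $\overline{C_0}$. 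Repeating the argument on any component of $\Xi$ lying above $C_0$ (with the roles of $\pm 1$ reversed) rules out further components, so $\Xi=C_0$; the alternative sign $u(x)\equiv-(x_n-h)$ in the theorem corresponds to the opposite initial normalization $u\equiv+1$ on $\{x_n\le 0\}$. The main technical obstacle, as in Section~\ref{n=3}, is the estimate replacing Lemma~\ref{K}: here $u_{x_1}$ and $u_{x_2}$ are harmonic only inside $\Xi$ with boundary values $\pm\nu_1,\pm\nu_2$ of unknown sign and distribution, so Lemma~\ref{K}'s linearization argument must be replaced by a Poisson-kernel $L^2$ extension estimate on the subdomain $\Xi\cap\{x_3<a\}$, and this is the step requiring the most care.
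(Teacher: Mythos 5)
Your overall strategy is the one the paper intends: the paper's proof of Theorem~\ref{two} is simply declared to be "essentially the same" as that of Theorem~\ref{one}, and your normalization ($u\equiv-1$ below, $u_{x_3}>0$ near the bottom), the vanishing of the Pohozaev boundary integrand on both phases $\{u=\pm1\}$, the Modica-type inequality coming from $|\nabla u|\le 1$, and the reduction of the $x_3$-integrals in the analogues of \eqref{integrals} and \eqref{deff} to the bounded interval $[0,a]$ are all faithful adaptations. There are, however, two points where your argument has real gaps. The more serious one is the final claim that "repeating the argument on any component of $\Xi$ lying above $C_0$ rules out further components, so $\Xi=C_0$." The entire mechanism is anchored at the bottom of the half-space: the balancing formula is applied in half-cylinders $\{x_3<a\}$, and it uses that $u$ is constant on $\{x_3\le 0\}$ and that $u_{x_3}>0$ on $\overline\Xi\cap\{x_3\le a\}$. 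None of this structure is available above the slab $C_0=\{h-1<x_3<h+1\}$: above its top face you only know $u=+1$ on $\{x_3=h+1\}$, not that $u$ is constant on a half-space below a putative second component, nor any monotonicity there, so there is nothing to "repeat." Indeed, the piecewise-linear configuration $u=x_3-1$ on $\{0<x_3<2\}$, $u\equiv 1$ on $\{2\le x_3\le 5\}$, $u=6-x_3$ on $\{5<x_3<7\}$, $u\equiv-1$ elsewhere satisfies every stated hypothesis of~\eqref{two-phase} (with $|\nabla u|\le1$ and smooth free boundary) and has two slab components; so the uniqueness of the component cannot follow from the bottom-anchored argument alone, and in the one-phase case this issue did not arise only because there the component meeting $\{x_3=a\}$ is a full half-space $\{x_3>c\}$ which, together with $\Xi\subset\{x_3>0\}$ and $c_k\to 0$, exhausts $\Xi$. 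Either the conclusion must be read componentwise (the component of $\Xi$ meeting the bottom region is the slab and $u$ is one-dimensional there), or an additional hypothesis/argument is needed; your proof as written does not supply it. Relatedly, pinning $C_0=\{0<x_3<2\}$ from $\inf_{\overline\Xi}x_3=0$ tacitly assumes no component of $\Xi$ lies below $C_0$; this can in fact be excluded using $u_{x_3}>0$ on $\overline\Xi\cap\{x_3\le a\}$ (a vertical-segment argument), but you should say so.

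The second gap is the analogue of Lemma~\ref{K}, which you correctly single out as the delicate step but then only gesture at via "a Poisson-kernel $L^2$ extension estimate on $\Xi\cap\{x_3<a\}$." As stated this does not work: $u_{x_1},u_{x_2}$ are harmonic only in $\Xi\cap\{x_3<a\}$, and on the free-boundary portion of that domain their values are $\pm\nu_1,\pm\nu_2$, of size up to $1$ on a set you do not control, so no maximum-principle or Poisson-kernel comparison driven solely by the data on the top slice $\{x_3=a\}$ can bound $K(s,x_3)$ by $C(1+K(s+\beta\ln s,a))$. (The paper is equally silent on this point, saying only "similarly to the proof of Section~\ref{n=3}," so you are at the paper's level of rigor here, but since you flag it as the key obstacle you should indicate a workable route.) One such route: in $\Xi\cap\{x_3<a\}$ one has $u_{x_1}^2+u_{x_2}^2=|\nabla u|^2-u_{x_3}^2\le 1-u_{x_3}^2\le 2\,(1-u_{x_3})$, and $w:=1-u_{x_3}$ is a nonnegative harmonic function there which vanishes in the limit where the free boundary is flat and whose slice integral at height $a$ is controlled by $g(s)$, since the integrand in \eqref{integrals} dominates $\tfrac12(1-u_{x_3}^2)\ge\tfrac12(1-u_{x_3})$; estimating $w$ (rather than $u_{x_1},u_{x_2}$ separately) removes the problem of the order-one tangential boundary data. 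Without some such substitute, the differential inequality $f'(s)\ge Cf(s)^2/(s\ln s)$ is not established in the two-phase setting.
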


\noindent\textit{Acknowledgements.} {\small{F. Hamel is partially supported by: the Excellence Initiative of Aix-Marseille University~-~A*MIDEX, a French ``Investissements d'Avenir'' programme, the European Research Council under the European Union's Seventh Framework Programme (FP/2007-2013) ERC Grant Agreement n.~321186~- ReaDi~- Reaction-Diffusion Equations, Propagation and Model\-ling, and the ANR NONLOCAL project (ANR-14-CE25-0013). Y. Liu is partially supported by \textquotedblleft The Fundamental Research Funds for the Central Universities WK3470000014\textquotedblright. P. Sicbaldi is partially supported by the grant \textquotedblleft Ram\'on y Cajal 2015\textquotedblright\, RYC-2015-18730 and the grant \textquotedblleft Ana\-lisis geom\'etrico\textquotedblright\, MTM 2017-89677-P.
K.~Wang is supported by NSFC no. 11871381. J.~Wei is partially supported by NSERC of Canada.\\
Part of the paper was finished while Y.~ÊLiu was visiting the University of British Columbia in 2019, and he appreciates the institution for its hospitality and financial support. Part of this work was also completed while F.~Hamel and J.~Wei were visiting the University of Granada in 2019 in occasion of the conference \textquotedblleft Geometry and PDE in front of the Alhambra\textquotedblright, and they also appreciate the institution for the hospitality and the financial support. Finally, part of this work has been carried out in the framework of Archim\`ede Labex of Aix-Marseille University.\\
The authors are also grateful to Alberto Farina for pointing out his results~\cite{Farina98,Farina03} and their relation with our paper.}}


\end{document}